\newtheorem{thm}{Theorem}[section]
\newtheorem{theorem}[thm]{Theorem}
\newtheorem{corollary}[thm]{Corollary}
\newtheorem{lemma}[thm]{Lemma}
\newtheorem{proposition}[thm]{Proposition}
\theoremstyle{remark}
\newtheorem{remark}[thm]{Remark}
\newcommand{\norm}[1]{\|#1\|}
\newcommand{\RR}{\mathbb R}
\newcommand{\e}{\varepsilon}
\newcommand{\ip}[2]{\left\langle#1,#2\right\rangle}
\newcommand{\abs}[1]{\left| #1 \right|}
\begin{document}

\title{Controled scaling of Hilbert space frames for $\RR^2$}
\author[Casazza, Xu
 ]{Peter G. Casazza and Shang Xu}
\address{Department of Mathematics, University
of Missouri, Columbia, MO 65211-4100}

\thanks{The authors were supported by
 NSF DMS 1609760 and 1906725}

\email{Casazzap@missouri.edu, sxz59@mail.missouri.edu}

\subjclass{42C15}

\begin{abstract}
A Hilbert space frame on $\RR^n$ is {\it scalable} if we can scale the vectors to make them a tight frame.  There are known
classifications of scalable frames.  There are two basic questions here which have never been answered in any $\RR^n$:
\begin{enumerate}
\item Given a frame in $\RR^n$, how do we scale the vectors to minimize the condition number of the frame?  I.e. How do we
scale the frame to make it as tight as possible?
\item If we are only allowed to use scaling numbers from the interval $[1-\epsilon,1+\epsilon]$, how do we scale the frame to
minimize the condition number?
\end{enumerate}
We will answer these two questions in $\RR^2$ to begin the process towards a solution in $\RR^n$.
\end{abstract}

\maketitle

\section{Introduction}

A family of vectors $\{\phi_i\}_{i=1}^m$ in an n-dimensional Hilbert space $H^n $  is a {\it frame} if there are constants
$0<A\le B<\infty$ satisfying:
\[ A \|\phi\|^2 \le \sum_{i=1}^m|\langle \phi,\phi_i\rangle|^2 \le B \|\phi\|^2, \mbox{ for all }\phi \in H^n.\]
If $A=B$ this is an {\it A-tight frame} and if $A=B=1$ this is a {\it Parseval frame}.  The largest A and smallest B satisfying
this inequalty are called the {\it lower} (respectively, {\it upper}) {\it frame bound}.  The {\it analysis operator} of the frame is the
operator $T:H^n\rightarrow \ell_2(m)$ given by $T\phi= (\langle \phi,
\langle \phi_i)_{i=1}^m$.  The {\it synthesis operator} of the frame
is $T^*$ and satisfies $T^*(a_i)=\sum_{i=1}^ma_i\phi_i$.  The {\it frame operator} is $S=TT^*$ and is the
positive, self-adjoint invertible operator $S:H^n\rightarrow H^n$ given by:
\[ S(\phi)=\sum_{i=1}^m\langle \phi, \phi_i\rangle \phi_i.\]
B turns out to be the largest eigenvalue of S and A is the smallest eigenvalue of S.
The quotient $B/A$ is called the {\it condition number}.  It is known \cite{CL} that a frame is A-tight if and only if 
$S\phi= A\phi$ for all $\phi \in H$.

Once at a meeting David Larson defined scaling for frames and asked:  {\it Which frames are scalable}?  A frame is
{\it scalable} if we can change the lengths of the frame vectors to form a tight frame.  Since then, much work has been
done on this problem \cite{CC,CCH,CK,CKO,CKL,DK,KOP,KO}.  The reason we like this is because the condition number
heavily determines the complexity of reconstruction.  But if the frame is tight, the condition number is 1 and the
frame operator is $S=AI$.  So reconstruction is trivial.
But there are problems with all of this.  First, the results in
this area invariably end up sending a hugh portion of the frame vectors to zero. 
But, this problem
grew out of applications where the frame is being used to analyze signals and in pratice we cannot just set a bunch of the
frame vectors equal to zero (or make them very small) and still be able to do signal processing.  Second, most of the results
in this are are not really solutions to the problem.  They are really just {\it equivalent formulations} of the problem.  That is,
they often read like:  A frame is scalable if and only if a certain operator does ...  But it is no easier to find and check this
operator than to try to figure out how to scale the vectors.  Third, in most cases the frame is not scalable.  And in this case
the real problem is to scale the frame so as to minimize the condition number since this gives the least complexity for
using the frame.  Fourth, in practice we can only scale the frame vectors a small amount.  So the real problem here is
to find the scaling which minimizes the condition number if the scaling is restricted to the interval
$[(1-\epsilon)\|\phi_i\|,(1+\epsilon)\|\phi_i\|]$.
We will address these shortcomings in the theory of scaling here.   A good reference for frame theory is \cite{C,CL}. 

\section{Some General Results}

We first consider a family of vectors which lies in an open quadrant.

\begin{proposition}
If after a change in signs $\{\phi_i\}_{i=1}^m$ lies in an open quadrant than this family is not scalable.
\end{proposition}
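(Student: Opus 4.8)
The plan is to reduce scalability to a single scalar condition on the frame operator and then exhibit a sign obstruction. Recall that to scale the frame we choose nonnegative coefficients $c_i\ge 0$ and ask whether $\{c_i\phi_i\}_{i=1}^m$ is tight; by the characterization recalled in the Introduction, this happens exactly when its frame operator $S_c(\phi)=\sum_{i=1}^m c_i^2\langle\phi,\phi_i\rangle\phi_i$ equals $AI$ for some $A>0$.

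Write $\phi_i=(x_i,y_i)$ in the standard orthonormal basis $\{e_1,e_2\}$ of $\RR^2$. Evaluating the off-diagonal entry of $S_c$ gives
\[ \langle S_c e_1, e_2\rangle = \sum_{i=1}^m c_i^2\,\langle e_1,\phi_i\rangle\langle\phi_i,e_2\rangle = \sum_{i=1}^m c_i^2\, x_i y_i, \]
and for $S_c=AI$ this quantity must vanish. So the entire proof comes down to showing that this sum cannot be made zero by any admissible scaling. First I would eliminate the signs: replacing $\phi_i$ by $-\phi_i$ leaves each term $\langle\phi,\phi_i\rangle\phi_i$ unchanged, so the allowed sign changes do not affect scalability, and I may assume every $\phi_i$ lies in the open first quadrant, i.e.\ $x_i>0$ and $y_i>0$ for all $i$.

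The heart of the argument is then a positivity observation: with $x_i y_i>0$ for every $i$ and $c_i^2\ge 0$, the sum $\sum_{i=1}^m c_i^2 x_i y_i$ is a sum of nonnegative terms that vanishes only when $c_i=0$ for all $i$. But the all-zero scaling produces the zero operator, which is not a frame (its lower bound is $0$). Hence the off-diagonal entry is strictly positive for every valid scaling, $S_c$ can never be a multiple of the identity, and the family is not scalable.

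The argument is genuinely short, and the only place requiring care is the bookkeeping around vanishing coefficients: one must read ``scalable'' so that the scaled system is a bona fide frame (positive lower bound), which is exactly what excludes the degenerate all-zero solution. I do not expect any analytic difficulty here; the whole content is the sign rigidity forced by confinement to a single open quadrant, and the proof should compress to a few lines once the tightness condition is written in the coordinate form above.
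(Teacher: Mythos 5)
Your proof is correct, but it takes a genuinely different route from the paper's. The paper argues variationally: after rotating so the vectors lie in the open first quadrant off the $e_1$-axis, it perturbs the test direction $e_1$ slightly toward $(0,1)$ and observes that every term $|\langle\phi,\phi_i\rangle|^2$ strictly increases, so the frame potential is not constant on the unit sphere and the frame cannot be tight (and the same applies to any positively scaled copy, since scaling keeps the vectors in the open quadrant). You instead isolate the single algebraic obstruction $\langle S_c e_1,e_2\rangle=\sum_i c_i^2 x_i y_i$, which is exactly condition (2) of the paper's Theorem \ref{scalability_conditions} proved immediately afterward; since $x_iy_i$ has one strict sign throughout an open quadrant, this sum cannot vanish unless every $c_i=0$, which is excluded because the zero system is not a frame. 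Your version is cleaner and handles the scaling coefficients explicitly, something the paper's proof leaves implicit. One small imprecision: sign changes $\phi_i\mapsto-\phi_i$ move a vector from quadrant 2 to quadrant 4, not to quadrant 1, so you cannot reduce to the first quadrant by sign changes alone (you would need a rotation, as the paper uses). But your argument does not actually need that reduction: in any open quadrant the products $x_iy_i$ all share a strict sign, so the off-diagonal entry is nonzero regardless, and the proof goes through verbatim.
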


\begin{proof}
After a rotation we may assume the vectors lie in the first quadrant and no vector lies on $e_1=(1,0)$. Now, rotating $\phi=e_1$
a tiny amount towards $(0,1)$ so it does not pass any of our vectors we have:
\[ \sum_{i=1}^m|\langle \phi, \phi_i\rangle|^2 > \sum_{i=1}^m|\langle e_1,\phi_i\rangle|^2,\]
and so the frame is not tight.
\end{proof}

There is a simple way to see when a frame is tight.

\begin{theorem}\label{scalability_conditions}
 A frame $\{\phi_i\}_{i=1}^m$ where $\phi_i=(a_i,b_i) \in \RR^2$ is tight if and only if the following two conditions hold:
\begin{enumerate}
\item $\sum_{i=1}^M a_i^2 = \sum_{i=1}^M b_i^2 = A$
 
\item $\sum_{i=1}^M a_ib_i = 0$
\end{enumerate}
\end{theorem}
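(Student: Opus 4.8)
The plan is to work directly with the frame operator $S$ on $\RR^2$ expressed in the standard basis, since the tightness of the frame is equivalent to $S = AI$ for some $A > 0$.

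First I would compute the matrix of $S$ relative to $\{e_1, e_2\}$. Writing $\phi_i = (a_i, b_i)$, the rank-one operator $\phi \mapsto \langle \phi, \phi_i\rangle \phi_i$ has matrix $\begin{pmatrix} a_i^2 & a_ib_i \\ a_ib_i & b_i^2 \end{pmatrix}$. Summing over $i$ gives
\[
S = \begin{pmatrix} \sum_i a_i^2 & \sum_i a_ib_i \\ \sum_i a_ib_i & \sum_i b_i^2 \end{pmatrix}.
\]
This is the single computational step and it is entirely routine. The substance of the theorem is then just the observation that a symmetric $2\times 2$ matrix equals a scalar multiple of the identity if and only if its diagonal entries coincide and its off-diagonal entry vanishes.

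For the forward direction I would invoke the cited characterization (from \cite{CL}) that an $A$-tight frame satisfies $S\phi = A\phi$ for all $\phi$, so $S = AI$. Reading off the entries of $S = AI$ immediately yields $\sum_i a_i^2 = \sum_i b_i^2 = A$ (condition (1)) and $\sum_i a_ib_i = 0$ (condition (2)). For the reverse direction I would assume the two conditions hold; then the matrix above reduces to $AI$, so $S\phi = A\phi$ for all $\phi$, which by the same characterization means the frame is $A$-tight. Both directions therefore collapse into reading the matrix entry-by-entry.

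I do not anticipate a genuine obstacle here: the result is essentially a restatement of $S = AI$ in coordinates. The only point requiring a little care is making sure the family is genuinely a frame (equivalently, that $S$ is invertible, i.e.\ $A > 0$) so that tightness is meaningful; given that the $\phi_i$ span $\RR^2$, conditions (1) and (2) with $A > 0$ guarantee $S = AI$ is positive and invertible, so the spanning hypothesis is what keeps the statement nontrivial. One small notational caveat is that the summation index in the statement jumps between $m$ and $M$, so I would keep a consistent upper limit throughout the proof.
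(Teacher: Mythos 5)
Your proof is correct and takes essentially the same route as the paper: both reduce tightness to $S=AI$ and read the conditions off the coordinate representation of the frame operator, with the converse being the identical computation. The only difference is cosmetic — the paper obtains condition (1) by testing the tight-frame identity on $e_1,e_2$ and cites \cite{CL} for condition (2), whereas you derive both at once from the matrix of $S$, which is slightly more self-contained.
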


\begin{proof}
Let $e_1=(1,0)$ and $e_2=(0,1)$.
Assume our frame is A-tight.  Then
\[ \sum_{i=1}^ma_i^2=\sum_{i=1}^m|\langle e_1,\phi_i\rangle|^2 = A\|e_1\|=A\|e_2\|^2=\sum_{i=1}^m|\langle e_2,\phi_i\rangle|^2
=\sum_{i=1}^mb_i^2.\] 
Property two is true of all tight frames \cite{CL}.

Conversely, given (1) and (2), for any $\phi=(a,b)\in H^2$ we have
\begin{align}
S\phi&= \sum_{i=1}^m\langle \phi,\phi_i\rangle \phi_i\\
&= \sum_{i=1}^m(aa_i+bb_i)(a_i,b_i)\\
&= \left ( \sum_{i=1}^maa_i^2+bb_ia_i, aa_ib_i+bb_i^2\right )\\
&=(aA,bA)=A\phi.
\end{align}
So the frame is A-tight. 
\end{proof}

Earlier we mentioned that minimizing the condition produces the frame which is the closest to being tight.  Now we will make
this statement formal.

\begin{lemma}\label{min_norm_difference_over_scalars}
  Given a frame $\phi = \{\phi_i\}_{i=1}^m$ in $\RR^n$ with frame operator $S$
  and frame bounds $A$ and $B$
  \[\inf_{c} \left \| S - c Id\right \| = \frac{B-A}{2}\] and the minimum is
  attained when $c = \frac{B+A}{2}$. Thus, if $\frac{B+A}{2} = 1$ then $ \| S -Id \| = \frac{B-A}{2}$
\end{lemma}
\begin{proof}
  Let $\{e_i\}_{i=1}^{n}$ be the eigenvectors of $S$ with corresponding
  eigenvalues $\{\lambda_i\}_{i=1}^{n}$. Then observe that for $S - c Id$ the
  eigenvectors are also $\{e_i\}_{i=1}^{n}$, but with eigenvalues $\{\lambda_i -
  c\}_{i=1}^{n}$. Then we may assume that the eigenvalues are ordered such that
  $B = \lambda_1 \geq \lambda_2 \geq ... \geq \lambda_n = A.$ So, $$\norm{S - c
    Id} = \max_{1 \leq i \leq n} \abs{\lambda_i - c} =
  \max\{\abs{A-c},\abs{B-c}\}.$$ If $c \notin [A,B],$ then moving towards
  $[A,B]$ will increase the max. If $$B \geq c > \frac{A+B}{2}$$
  then $$\abs{A-c} > \frac{A+B}{2} - A = \frac{B-A}{2}.$$ If $$\frac{A+B}{2} > c
  \geq A$$ then $$ \frac{B-A}{2} = \frac{A+B}{2} - A > \abs{c - A}.$$ That is
  $c = \frac{A+B}{2}$ and $\norm{S - c Id} = \frac{B-A}{2}.$ Letting $c=1$
  gives the desired result.
\end{proof}
\begin{proposition}\label{eq_of_op_norm}
  Given a frame $\Phi = \{\phi_i\}_{i=1}^m$ in $\RR^n$ let $C$ be the set of
  sequences of scalars $\alpha = \{a_i\}_{i=1}^m$ with $a_i>0$ for all $i \in
  \{1,..,m\}$. Let $A_{\alpha}$ and $B_{\alpha}$ be the frame bounds of
  $\alpha\phi = \{a_i \phi_i\}_{i=1}^m$ and $S_{\alpha}$ be $\alpha\Phi$'s frame
  operator. Then if $\alpha, \beta \in C$ the following are equivalent:
  \begin{enumerate}
  \item
    \[ \frac{B_{\alpha}}{A_{\alpha}} \leq \frac{B_{\beta}}{A_{\beta}}\]
  \item
    \[ \left \|S_{\alpha} - Id \right\| \leq \left \|S_{\beta} - Id \right\| \]
  \end{enumerate}
  In other words, if a scaling minimizes the condition number then it also
  minimizes the frame operator's distance from the identity.
\end{proposition}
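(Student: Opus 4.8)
My plan is to push everything down to the two extreme frame bounds and then compare the two conditions as functions of the pair $(A_\gamma,B_\gamma)$. First, reading off the eigen-decomposition used in the proof of Lemma~\ref{min_norm_difference_over_scalars}: for any scaling $\gamma\in C$ the operator $S_\gamma-Id$ is diagonalized by the eigenvectors of $S_\gamma$ with eigenvalues $\lambda-1$, so that $\norm{S_\gamma-Id}=\max\{\abs{A_\gamma-1},\abs{B_\gamma-1}\}$. Thus condition (2) is entirely a statement about the largest and smallest frame bounds, while condition (1) compares the ratios $B_\gamma/A_\gamma$. The goal is to show that, along scalings of a fixed frame, the quantity $\max\{\abs{A-1},\abs{B-1}\}$ increases precisely when the condition number $B/A$ increases, which would give both implications at once.

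For each direction I would argue by monotonicity. In the balanced regime $A_\gamma\le 1\le B_\gamma$ we have $\norm{S_\gamma-Id}=\max\{1-A_\gamma,\,B_\gamma-1\}$, and I would compare this to the symmetric quantity $\frac{B_\gamma-A_\gamma}{A_\gamma+B_\gamma}=\frac{r_\gamma-1}{r_\gamma+1}$, where $r_\gamma=B_\gamma/A_\gamma$; since $r\mapsto\frac{r-1}{r+1}$ is strictly increasing on $[1,\infty)$, an ordering of condition numbers transfers to an ordering of these distances. The degenerate regimes $1<A_\gamma$ and $1>B_\gamma$ would be handled separately, by first moving $1$ into the spectral interval and tracking how $\max\{\abs{A-1},\abs{B-1}\}$ changes.

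The hard part---and the step I expect to carry the whole argument---is that $\max\{\abs{A-1},\abs{B-1}\}$ is \emph{not} a function of the ratio $B/A$ alone: it also sees the absolute scale $A+B$, which the condition number discards. Indeed $C$ is closed under the uniform rescaling $\gamma\mapsto t\gamma$, which sends $(A_\gamma,B_\gamma)\mapsto(t^2A_\gamma,t^2B_\gamma)$ and fixes $B_\gamma/A_\gamma$ while changing $\norm{S_\gamma-Id}$; so the two orderings can be forced to agree only once this scale degree of freedom is controlled. I would resolve this using exactly that scale-freedom: for each of $\alpha,\beta$ pass along its ray to the representative whose spectrum straddles $1$, i.e.\ where the infimum in Lemma~\ref{min_norm_difference_over_scalars} is attained at $c=1$ and $\norm{S_\gamma-Id}=\frac{B_\gamma-A_\gamma}{A_\gamma+B_\gamma}=\frac{r_\gamma-1}{r_\gamma+1}$; the strict monotonicity of $r\mapsto\frac{r-1}{r+1}$ then closes both implications. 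Making this reduction rigorous, and pinning down that this balanced comparison is the one the proposition intends, is where the genuine content lies, since for arbitrary representatives of the two rays the orderings in (1) and (2) need not coincide.
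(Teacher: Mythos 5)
Your proof is correct and follows essentially the same route as the paper: reduce via Lemma~\ref{min_norm_difference_over_scalars} to representatives normalized so that $\frac{A_\gamma+B_\gamma}{2}=1$, then note that $\norm{S_\gamma-Id}=\frac{B_\gamma-A_\gamma}{A_\gamma+B_\gamma}=\frac{r_\gamma-1}{r_\gamma+1}$ is a strictly increasing function of the condition number $r_\gamma=B_\gamma/A_\gamma$ (the paper parametrizes by $A_\gamma$, writing $\norm{S_\gamma-Id}=1-A_\gamma$ and $r_\gamma=\frac{2}{A_\gamma}-1$, which is the same monotonicity argument in different variables). Your explicit observation that the equivalence fails for arbitrary un-normalized representatives---since $\gamma\mapsto t\gamma$ fixes $B_\gamma/A_\gamma$ but changes $\norm{S_\gamma-Id}$---is a genuine point the paper's proof performs silently in its reduction step, and you are right that the proposition is only true once read through that normalization.
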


\begin{proof}
  First note that if $\alpha \in C$ and $\frac{A_{\alpha} + B_{\alpha}}{2} = c
  \neq 1$ then we can multiply $\alpha$ by $\frac{1}{\sqrt{c}}$ to get a new
  scaling $\alpha'$ such that $\frac{A_{\alpha'} + B_{\alpha'}}{2} = 1$ and
  $\frac{B_{\alpha}}{A_{\alpha}} = \frac{B_{\alpha'}}{A_{\alpha'}}$. To see this
  let $e$ be an eigenvector of $S_{\alpha}$ with eigenvalue $\lambda$. Then
  $S_{\alpha'}e = \sum_{i=1}^m \ip{e}{\frac{a_i}{\sqrt{c}}\phi_i}
  \frac{a_i}{\sqrt{c}}\phi_i = \frac{1}{c}\sum_{i=1}^m \ip{e}{a_i\phi_i}
  a_i\phi_i = \frac{1}{c} S_{\alpha} e = \frac{\lambda}{c} e$. Since
  $A_{\alpha}$, $B_{\alpha}$, $A_{\alpha'}$, and $B_{\alpha'}$ are eigenvalues
  of their respective operators the statement follows.
    
  Thus we can focus our attention on scalings $\alpha$ and $\beta$ with frame
  bounds satisfying $\frac{A_{\beta} + B_{\beta}}{2} = \frac{A_{\alpha} +
    B_{\alpha}}{2} = 1$. In this case $\|S_{\gamma} - Id \| = \frac{B_{\gamma} -
    A_{\gamma}}{2}$ by lemma \ref{min_norm_difference_over_scalars}. Further
  more we have $\frac{B_{\gamma} + A_{\gamma}}{2} = 1$ so $B_{\gamma} = 2 -
  A_{\gamma}$. Next,
  \[\|S_{\gamma} - Id \| = \frac{B_{\gamma} - A_{\gamma}}{2} = \frac{2 -
      A_{\gamma} - A_{\gamma}}{2} = 1 - A_{\gamma}\]
  and \[\frac{B_{\gamma}}{A_{\gamma}} = \frac{2- A_{\gamma}}{A_{\gamma}} =
    \frac{2}{A_{\gamma}} - 1 .\] Thus:
  
    \[\begin{gathered}
        \frac{B_{\alpha}}{A_{\alpha}} \leq \frac{B_{\beta}}{A_{\beta}}\ \iff
        \frac{2}{A_{\alpha}} - 1 \leq \frac{2}{A_{\beta}} - 1 \iff A_{\beta}
        \leq
        A_{\alpha}  \\
        \iff 1 - A_{\alpha} \leq 1 - A_{\beta} \iff \|S_{\alpha} - Id \| \leq
        \|S_{\beta} - Id \|
      \end{gathered}\]
  
  \end{proof}

Finally,

\begin{theorem}
	Let $\{ \phi_{i} \}_{i=1}^{M}$ be a frame in $\mathbb{R}^{n}$ and denote by $S$ its frame operator.  Let  $\lambda_1\geq \cdots \geq \lambda_n$ be the eigenvalues of $S$. Then we have
	$$ \min_{c\geq 0}\| I_n-cS\|^2 = n - \frac{\left( \sum_{i=1}^{n} \lambda_{i} \right)^{2}}{\sum_{i=1}^{n} \lambda_{i}^{2}} .$$
\end{theorem}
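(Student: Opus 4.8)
The plan is to use that $S$ is a positive, self-adjoint, invertible operator, hence orthogonally diagonalizable, and to read $\norm{\cdot}$ here as the Hilbert--Schmidt (Frobenius) norm $\norm{A}^2=\tr(A^*A)$. This must be the intended norm: for the operator norm the same minimization instead gives $(\lambda_1-\lambda_n)/(\lambda_1+\lambda_n)$, which does not match the stated right-hand side. First I would fix an orthonormal eigenbasis $\{e_i\}_{i=1}^n$ of $S$, so that in this basis $S=\diag(\lambda_1,\dots,\lambda_n)$ and $I_n-cS=\diag(1-c\lambda_1,\dots,1-c\lambda_n)$ for every $c$. Since the Frobenius norm is invariant under conjugation by the orthogonal change-of-basis matrix, this reduces the problem to the diagonal case with no loss.

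Next, I would compute the Frobenius norm directly as a sum over the diagonal,
\[
\norm{I_n-cS}^2=\sum_{i=1}^n(1-c\lambda_i)^2 = n-2c\sum_{i=1}^n\lambda_i+c^2\sum_{i=1}^n\lambda_i^2,
\]
which is a quadratic in the single real variable $c$ with positive leading coefficient $\sum_i\lambda_i^2$. Completing the square (or differentiating in $c$) gives the unconstrained minimizer
\[
c^*=\frac{\sum_{i=1}^n\lambda_i}{\sum_{i=1}^n\lambda_i^2},
\]
and substituting $c^*$ back collapses the expression to $n-\big(\sum_i\lambda_i\big)^2/\sum_i\lambda_i^2$, the desired value.

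The one point that genuinely uses the frame hypothesis, and the step I would be most careful about, is verifying that the constraint $c\ge 0$ is inactive. Because $S$ is the frame operator of a frame it is positive and invertible, so every eigenvalue satisfies $\lambda_i>0$; hence both $\sum_i\lambda_i$ and $\sum_i\lambda_i^2$ are strictly positive and $c^*>0$. Thus the unconstrained minimizer already lies in the feasible region $c\ge 0$, and the minimum over $c\ge 0$ coincides with the global minimum of the quadratic. Apart from this positivity check the argument is a routine one-variable optimization; the only conceptual ingredient is the unitary invariance of the Hilbert--Schmidt norm that allows the passage to the diagonalized form.
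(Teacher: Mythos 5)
Your proof is correct and takes essentially the same route as the paper: both reduce $\|I_n-cS\|^2$ to the quadratic $n-2c\sum_i\lambda_i+c^2\sum_i\lambda_i^2$ via the eigenvalues (implicitly using the Hilbert--Schmidt norm) and minimize over $c$. You are in fact more careful on two points the paper glosses over --- you make the choice of norm explicit, you check that the unconstrained minimizer is positive so the constraint $c\ge 0$ is inactive, and you state the minimizer correctly as $c^*=\sum_i\lambda_i/\sum_i\lambda_i^2$, whereas the paper's displayed value of $c$ carries an erroneous square on the numerator (though its final answer is right).
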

\begin{proof}

$$ \min_{c \geq 0} \| I_n-cS\|^2 = \sum_{i=1}^{n} | 1-c \lambda_{i} |^{2} = n + c^{2} \sum_{i=1}^{n} \lambda_{i}^{2} - 2c \sum_{i=1}^{n} \lambda_{i}$$

Hence, setting

$$ \frac{d}{dc} \min_{c \geq 0} \| I_n-cS\|^2 = 2c \sum_{i=1}^{n} \lambda_{i}^{2} -2\sum_{i=1}^{n} \lambda_{i} = 0  $$
we get 

\[c=\frac{\left(\sum_{i=1}^n\lambda_i\right)^2}{\sum_{i=1}^n\lambda_i^2},\]

which gives the desired result.
\end{proof}
\subsection{Motivation for the restricted scaling factors}
Suppose $\{ \Phi_i\}_{i=1}^M$ is an unscalable frame in $\RR^2$, where the outer two vectors are of the form $(1,0), (\e, \sqrt{1-\e^2})$, and the rest of the vectors all lie between. Suppose one can scale each vector
by any number $\alpha_i$, then one can either set the middle vectors to zero, which from the following results, will produce the lowest condition number. Another alternative result is to set $\alpha$ for the outer two vectors very large, which is equivalent to scaling the middle vectors to zero. However, doing so is highly impractical, and thus an upper and lower bound of the scaling factor must be implemented in order to deliver a result that is useful. 

\section{Two Vectors}

\noindent {\bf Assumptions for this section}:
Let $S$ be the frame operator of a frame in $\mathbb{R}^2$ consisting of $\phi_1 = (k,0)^T$, $\phi_2 = (a,b)^T$ where $\|\phi_1\|\leq \|\phi_2\| = 1$.  Hence, it follows that $1 = \|\phi_2\|^2 = a^2 + b^2$, that is: $b^2 = 1 - a^2$. It is also assumed that $0 < k < 1$.

\begin{proposition}
The eigenvectors of $S$ are a linear combination of $\phi_1$ and $\phi_2$, namely,
$$
\frac{v}{\|v\|} = w \phi_1 + \phi_2 = w \binom{k}{0} + \binom{a}{b} = \binom{wk + a}{b} = (wk + a, b)^T.
$$
The eigenvalues are $1 + wka, k^2 -wka$.
\end{proposition}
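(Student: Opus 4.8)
The plan is to exploit the fact that, since $0<k<1$ and (generically) $b\neq 0$, the two frame vectors $\phi_1,\phi_2$ form a basis of $\RR^2$, and to compute the action of $S$ directly on this basis rather than diagonalizing the matrix of $S$ in standard coordinates. Because $S\phi=\langle\phi,\phi_1\rangle\phi_1+\langle\phi,\phi_2\rangle\phi_2$, and because $\langle\phi_1,\phi_1\rangle=k^2$, $\langle\phi_2,\phi_2\rangle=1$, and $\langle\phi_1,\phi_2\rangle=ka$, I would first record the two identities
\[S\phi_1 = k^2\phi_1 + ka\,\phi_2, \qquad S\phi_2 = ka\,\phi_1 + \phi_2.\]
These say that in the basis $\{\phi_1,\phi_2\}$ the operator $S$ is represented by the symmetric matrix $\begin{pmatrix} k^2 & ka \\ ka & 1\end{pmatrix}$.

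Next I would make the ansatz that an eigenvector has the form $v=w\phi_1+\phi_2$ (coordinates $(w,1)$ in the $\phi$-basis) and impose $Sv=\lambda v$. Using the two identities above, $Sv=(k^2 w+ka)\phi_1+(kaw+1)\phi_2$, so matching coefficients against $\lambda(w\phi_1+\phi_2)$ gives the pair of scalar equations $kaw+1=\lambda$ and $k^2 w+ka=\lambda w$. The first immediately yields the eigenvalue $\lambda=1+wka$, exactly as claimed. Substituting into the second produces the quadratic $ka\,w^2-(k^2-1)w-ka=0$, whose two roots $w_1,w_2$ give the two eigenvectors; in particular the product of roots is $w_1 w_2=-1$, which I would use to confirm that $w_1\phi_1+\phi_2$ and $w_2\phi_1+\phi_2$ are orthogonal, as they must be for a self-adjoint operator.

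For the second eigenvalue I would avoid re-solving the quadratic and instead use that the trace is basis-independent: $\tr S = k^2+a^2+b^2 = k^2+1$. Hence once one eigenvalue is $1+wka$, the other must be $(k^2+1)-(1+wka)=k^2-wka$, giving the two claimed values. One can check consistency: the eigenvalue attached to the companion root $w'=-1/w$ is $1+w'ka=1-ka/w$, and the quadratic relation forces this to equal $k^2-wka$.

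The main point requiring care is the normalization implicit in writing every eigenvector with $\phi_2$-coefficient equal to $1$. This is legitimate precisely when no eigenvector is a scalar multiple of $\phi_1$ alone; from $S\phi_1=k^2\phi_1+ka\,\phi_2$ this degenerate situation occurs only if $ka=0$, i.e.\ $a=0$ (recall $k>0$), in which case $S=\diag(k^2,1)$ and the eigenvalues $k^2,1$ still agree with the formula at $w=0$. Thus the ansatz captures both eigenvectors whenever $a\neq 0$, and the only genuine obstacle is book-keeping this degenerate case; everything else reduces to the routine solution of a quadratic together with the trace computation.
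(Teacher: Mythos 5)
Your proof is correct, and it arrives at the same quadratic $(ka)w^2+(1-k^2)w-ka=0$ (equation (1.2) of the paper) and the same eigenvalues, but by a noticeably cleaner route. The paper works entirely in standard coordinates: it writes out $S=\begin{pmatrix} k^2+a^2 & ab\\ ab & b^2\end{pmatrix}$, substitutes the candidate eigenvector $v=(wk+a,b)^T$, reads $\lambda_1=1+wka$ off one scalar equation and the quadratic for $w$ off the other, and then repeats the whole computation with the rotated vector $\hat v=(-b,wk+a)^T$ to obtain $\lambda_2=k^2-wka$. You instead represent $S$ in the (non-orthogonal) basis $\{\phi_1,\phi_2\}$ via $S\phi_1=k^2\phi_1+ka\,\phi_2$ and $S\phi_2=ka\,\phi_1+\phi_2$, which makes the coefficient matching immediate, and you obtain the second eigenvalue for free from $\tr S=k^2+1$ rather than from a second eigenvector computation; the product of roots $w_1w_2=-1$ then certifies orthogonality of the two eigenvectors, which in the paper's version requires the separate verification with $\hat v$. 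Your handling of the degenerate case $a=0$ (where one eigenvector is a multiple of $\phi_1$ alone, so the normalization ``coefficient of $\phi_2$ equals $1$'' breaks down, yet the eigenvalue formulas remain valid at $w=0$) addresses a point the paper glosses over. The one thing you omit that the paper needs downstream is the explicit solution of the quadratic, $w=\frac{k^2-1\pm\sqrt{D}}{2ka}$ with $D=(1-k^2)^2+4k^2a^2$, which converts the eigenvalues into the closed form $\frac{k^2+1\pm\sqrt{D}}{2}$ used to define the condition-number function $f(k,a)$ in the subsequent propositions; that is not required by the statement as written, but it is worth recording if your argument is to replace the paper's.
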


\begin{proof}
First of all let us notice that the frame operator matrix has the following form, where $T$ and $T^\ast$ are the analysis and synthesis operators correspondingly:
$$
S = T^\ast \times T =
$$

\[
=\begin{bmatrix}
k & a\\
0 & b
\end{bmatrix}\times
\begin{bmatrix}
k & 0\\
a & b
\end{bmatrix} =
\begin{bmatrix}
k^2 + a^2 & ab\\
ab & b^2
\end{bmatrix}.
\]

Since by Proposition 4.3 \cite{CL}, it follows that $\phi_1$ and $\phi_2$ span $\mathbb{R}^2$, the eigenvectors can always be written as a linear combination of the two basis.

Let us find the first eigenvalue corresponding to $v = (wk+a, b)^T$. 

\[
Sv = \lambda_1 v \Longleftrightarrow \begin{bmatrix}
k^2 + a^2 -\lambda_1 & ab\\
ab & b^2 - \lambda_1
\end{bmatrix}
\times
\begin{bmatrix}
wk+a\\
b
\end{bmatrix}
= 0 \Longleftrightarrow
\]

\[
\begin{bmatrix}
(k^2 + a^2 -\lambda_1)(wk + a) + ab^2\\
ab(wk + a) + b(b^2 - \lambda_1)
\end{bmatrix}
= 0 \Longleftrightarrow
\]

\begin{equation*}
 \begin{cases}
   (k^2 + a^2 - \lambda_1)(wk+a)+ ab^2 = 0, 
   \\
   ab(wk + a) + b(b^2 - \lambda_1) = 0.
 \end{cases}
\end{equation*}

Out of the second equation it follows that:
$$
\lambda_1 = awk + a^2 + 1 - a^2 = 1 +awk
$$

Out of the first equation it follows that
$$
\lambda_1(wk + a) = k^3w + ak^2 + a^2wk + a,
$$
that is:
$$
(1 + wka)(wk + a) = k^3w + ak^2 + a^2wk + a
$$
$$
wk + a + w^2 k^2 a +wka^2 = k^3w + ak^2 + a^2wk + a
$$
$$
wk + w^2 k^2 a = k^3w + ak^2
$$
$$
w + w^2 k a = k^2w + ak
$$
$$
ka = (w^2 k)a + w(1 - k^2) \eqno{(1.1)}
$$

Let us find the second eigenvalue corresponding to $\hat{v} = (-b, wk+a)^T$.

\[
Sv = \lambda_2 v \Longleftrightarrow \begin{bmatrix}
k^2 + a^2 -\lambda_2 & ab\\
ab & b^2 - \lambda_2
\end{bmatrix}
\times
\begin{bmatrix}
-b\\
wk + a
\end{bmatrix}
= 0 \Longleftrightarrow
\]

\[
\begin{bmatrix}
-(k^2 + a^2 -\lambda_2)b + ab(wk + a)\\
-ab^2 + (wk + a)(b^2 - \lambda_2)
\end{bmatrix}
= 0 \Longleftrightarrow
\]

\begin{equation*}
 \begin{cases}
   -(k^2 + a^2 -\lambda_2)b + ab(wk + a) = 0, 
   \\
   -ab^2 + (wk + a)(b^2 - \lambda_2) = 0.
 \end{cases}
\end{equation*}

Out of the first equation, it follows that:
$$
-k^2 - a^2 +\lambda_2 + awk + a^2 = 0
$$

$$
\lambda_2 = k^2 -awk
$$

Out of the second equation, it follows that 
$$
-ab^2 + (wk + a)b^2 = \lambda_2 (wk + a)
$$
$$
wkb^2 = \lambda_2 (wk + a)
$$
$$
\lambda_2 = \frac{wk(1  - a^2)}{(wk + a)}
$$
$$
k^2 - awk = \frac{wk(1  - a^2)}{(wk + a)}
$$
$$
k - aw = \frac{w(1  - a^2)}{(wk + a)}
$$
$$
(k - aw)(wk + a) = w  - w a^2
$$
$$
k^2w + ak - a w^2 k -a^2 w = w  - w a^2
$$
$$
k^2w + ak - a w^2 k = w
$$
$$
(ak)w^2 + (1 - k^2)w - ka = 0 \eqno{(1.2)}
$$

Note that $w^2 = 1$ when $k=1$.

\vspace{.3 in}

Solving $(1.1)$ or $(1.2)$ for $w$ gives:
$$
(ka)w^2 + (1 - k^2)w - ka = 0
$$
$$
D = (1 - k^2)^2 + 4 k^2 a^2
$$
$$
w_{1,2} = \frac{k^2 -1 \pm \sqrt{D}}{2ka} 
$$
$$
w := w_1 = \frac{k^2 -1 \pm \sqrt{D}}{2ka} 
$$
$$
wka = \frac{k^2 - 1 + \sqrt{D}}{2}
$$
$$
\lambda_1 = 1 + wka = 1 + \frac{k^2 -1 + \sqrt{D}}{2} = \frac{k^2 + 1 + \sqrt{D}}{2}
$$
$$
\lambda_2 = k^2 - wka = k^2 - \frac{k^2 - 1 + \sqrt{D}}{2} = \frac{k^2 + 1 - \sqrt{D}}{2}
$$
\end{proof}
It can be noticed that $\lambda_1 > \lambda_2$, and hence the \textbf{condition number} $\frac{B}{A} = \frac{\lambda_1}{\lambda_2}$.
$$
\frac{B}{A} = \frac{\lambda_1}{\lambda_2} = \frac{\frac{k^2 + 1 + \sqrt{D}}{2}}{\frac{k^2 + 1 - \sqrt{D}}{2}}: = f(k, a)
$$
$$
f(k, a) = \frac{\frac{k^2 + 1 + \sqrt{(1-k^2)^2 + 4k^2a^2}}{2}}{\frac{k^2 + 1 - \sqrt{(1-k^2)^2 + 4k^2a^2}}{2}} = \frac{k^2 + 1 + \sqrt{(1-k^2)^2 + 4k^2a^2}: = Num}{k^2 + 1 - \sqrt{(1-k^2)^2 + 4k^2a^2}: = Den}
$$

\vspace{.3 in}
\begin{proposition}
With the current assumption, let all but $a$ be fixed, then $\langle \phi_1, \phi_2 \rangle$ increases with the condition number.
\end{proposition}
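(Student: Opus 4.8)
The plan is to exploit the fact that, with $k$ and $b$ held fixed, both the inner product and the condition number are functions of the single free parameter $a$, and to show that each is monotone in $a$ over the relevant range; the asserted monotone dependence of one on the other is then immediate. First I would record that $\langle \phi_1,\phi_2\rangle = (k,0)\cdot(a,b) = ka$. Since $k$ is among the fixed quantities and $0<k<1$, the map $a\mapsto ka$ is strictly increasing (indeed affine), so it suffices to prove that the condition number $f(k,a)$ computed in the preceding proposition is itself strictly increasing in $a$; then both quantities grow together as $a$ increases, which is precisely the claim that $\langle\phi_1,\phi_2\rangle$ increases with the condition number.

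Next I would analyze $f$ directly from its closed form. Writing $p = k^2+1$ and $s = \sqrt{D} = \sqrt{(1-k^2)^2 + 4k^2a^2}$, the condition number is $f = \frac{p+s}{p-s}$. Differentiating in $s$ gives $\frac{\partial f}{\partial s} = \frac{2p}{(p-s)^2}$, which is strictly positive provided $p-s>0$. But $p-s = 2\lambda_2$, and $\lambda_2>0$ because $S$ is positive definite: indeed $\det S = (k^2+a^2)b^2 - (ab)^2 = k^2 b^2 > 0$ whenever $b\neq 0$, forcing both eigenvalues to be positive. Since also $\frac{\partial s}{\partial a} = \frac{4k^2 a}{\sqrt{D}} > 0$ for $a>0$, the chain rule yields $\frac{\partial f}{\partial a}>0$ on $(0,1)$, so $f$ is strictly increasing in $a$ there.

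Combining the two monotonicities finishes the argument: on $0\le a < 1$ the inner product $\langle\phi_1,\phi_2\rangle = ka$ runs bijectively over $[0,k)$ while $f$ increases, so $f$ is a strictly increasing function of $\langle\phi_1,\phi_2\rangle$. The one point requiring care — and the only real obstacle — is that $D$, and hence $f$, depends on $a$ only through $a^2$, so the condition number is even in $a$ whereas the inner product is odd. The statement is therefore meaningful only once the sign of $a$ is fixed (equivalently, when read for the magnitude $\abs{\langle\phi_1,\phi_2\rangle}$). Restricting to $a\ge 0$, which matches the quadrant normalization already used in the opening proposition, removes this ambiguity, and the standing hypothesis $b\neq 0$ (so that $\phi_1,\phi_2$ span $\RR^2$ and $\lambda_2>0$) guarantees the condition number stays finite throughout.
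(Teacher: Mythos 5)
Your proposal is correct and follows essentially the same route as the paper: both arguments reduce the claim to showing that the condition number $f(k,a)=\frac{p+s}{p-s}$ (with $p=k^2+1$, $s=\sqrt{D}$) is increasing in $a$ while $\langle\phi_1,\phi_2\rangle=ka$ is too, the paper doing the derivative by a direct quotient-rule computation and you by the chain rule through $s$, which yields the identical expression $\frac{8k^2a(k^2+1)}{s\,(p-s)^2}$. Your added checks --- that $p-s=2\lambda_2>0$ because $\det S=k^2b^2>0$, and that the sign of $a$ must be fixed since $f$ depends only on $a^2$ --- tighten points the paper leaves implicit, but do not change the approach.
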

\begin{proof}
First, since we are writing $\phi_1 = (k,0)$, and we are assuming both vectors lie in a quadrant, the inner product is simply $\langle \phi_1, \phi_2 \rangle = ak$. Now, let us look at the derivative of the condition number, $f(k,a)$ with respect to $a$. Note that by our assumption, $a \in (0,1)$
$$
f_a'(k,a) = \frac{1}{\Big(k^2 + 1 - \sqrt{(1-k^2)^2 + 4k^2a^2}\Big)^2} \times 
$$
$$
\times\Big(\frac{8k^2 a}{2\sqrt{(1-k^2)^2 + 4k^2 a^2}}\Big(k^2 + 1 - \sqrt{(1-k^2)^2 + 4k^2a^2}\Big) +
$$
$$
+ \Big(k^2 + 1 + \sqrt{(1-k^2)^2 + 4k^2a^2}\Big)\frac{8k^2 a}{2\sqrt{(1-k^2)^2 + 4k^2 a^2}}\Big) =
$$
$$
= \frac{1}{Den^2}\Big(\frac{8k^2a}{2\sqrt{D}}\cdot Den + \frac{8k^2a}{2\sqrt{D}}\cdot Num\Big) =
$$
$$
= \frac{1}{Den^2}\times\frac{8k^2a}{2\sqrt{D}}\Big(Den + Num\Big) =
$$
$$
= \frac{1}{Den^2}\times\frac{8k^2a}{2\sqrt{D}}\Big(2(k^2 + 1)\Big) > 0,~if~ a > 0.
$$
Since we assumed the two vectors are in the same quadrant, $a$ is positive. The inner product also increase as $a \rightarrow 1$, which follows immediate from the basic properties of inner products. Hence, we have shown that the condition number is affected by the inner product. In $\RR^2$, this can also be interpreted as the angle between the two vectors.
\end{proof}

\vspace{.3 in}
\begin{proposition}
With the current assumptions, let all but the norm of $\phi_1$ be fixed, then the condition number decreases as the $\Vert\phi_1\Vert = k$ increases, given that $k \in (0,1]$.
\end{proposition}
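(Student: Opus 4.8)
The plan is to avoid differentiating the unwieldy expression $f(k,a)$ directly in $k$ — where $k$ enters both inside and outside the radical $\sqrt{(1-k^2)^2+4k^2a^2}$ — and instead to exploit the two spectral invariants of $S$ already visible in the matrix computed above. From $S=\begin{bmatrix} k^2+a^2 & ab\\ ab & b^2\end{bmatrix}$ and $b^2=1-a^2$ one reads off $\tr S=\lambda_1+\lambda_2=k^2+1$ and $\det S=\lambda_1\lambda_2=k^2b^2=k^2(1-a^2)$. Since $a$ (hence $b$) is held fixed, both invariants are explicit functions of $k$ alone.

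Next I would reduce the monotonicity of the condition number $r:=\lambda_1/\lambda_2=f(k,a)\ge 1$ to the monotonicity of a single scalar quantity. Setting $g(r)=r+1/r+2$, a one-line computation gives $g(r)=(\lambda_1+\lambda_2)^2/(\lambda_1\lambda_2)=(\tr S)^2/\det S$. The map $g$ is strictly increasing on $[1,\infty)$, since its derivative $1-1/r^2$ is nonnegative there; hence $r$ is a strictly increasing function of $(\tr S)^2/\det S$. Consequently $r$ decreases in $k$ exactly when $(\tr S)^2/\det S$ does, and I have traded a question about an eigenvalue ratio for a question about a simple rational function.

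Then I would analyze $(\tr S)^2/\det S=(k^2+1)^2/\bigl(k^2(1-a^2)\bigr)$. With $1-a^2>0$ fixed this is a positive constant times $h(k):=(k^2+1)^2/k^2=k^2+2+k^{-2}$, whose derivative is $h'(k)=2k-2k^{-3}=2(k^4-1)/k^3$. For $k\in(0,1]$ we have $k^4-1\le 0$ and $k^3>0$, so $h'(k)\le 0$, with strict inequality on $(0,1)$. Thus $h$, and therefore $(\tr S)^2/\det S$, is (strictly) decreasing on $(0,1]$, and by the previous paragraph the condition number $f(k,a)$ is decreasing in $k$ on $(0,1]$, as claimed.

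The only genuine obstacle is organizational rather than computational: one must resist attacking $f$ head-on and instead notice that $\tr S$ and $\det S$ already package all of the $k$-dependence, and that passing through $g(r)=r+1/r+2$ legitimizes the reduction precisely because $g$ is monotone on the relevant range $r\ge 1$. One should also confirm that $\det S=k^2(1-a^2)>0$ throughout $(0,1]$ — guaranteed by $k>0$ and the standing assumption $a\in(0,1)$ — so that the ratio $(\tr S)^2/\det S$ is well defined and its equivalence with $g(r)$ is valid.
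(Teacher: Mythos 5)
Your proof is correct, and it takes a genuinely different route from the paper. The paper proves this proposition by differentiating the explicit formula $f(k,a)=\dfrac{k^2+1+\sqrt{(1-k^2)^2+4k^2a^2}}{k^2+1-\sqrt{(1-k^2)^2+4k^2a^2}}$ with respect to $k$ and grinding through roughly a page of algebra to exhibit the sign of $f_k'$, ultimately isolating the factor $2(a^2-1)<0$. You instead bypass the radical entirely by passing to the spectral invariants $\tr S=\lambda_1+\lambda_2=k^2+1$ and $\det S=\lambda_1\lambda_2=k^2(1-a^2)$, and observing that the condition number $r=\lambda_1/\lambda_2\ge 1$ is a strictly increasing function of $(\tr S)^2/\det S=r+1/r+2$; the whole question then collapses to the monotonicity of $k^2+2+k^{-2}$ on $(0,1]$. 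Both arguments are valid, but yours is shorter, far less error-prone, and more illuminating: it makes visible at a glance why $k=1$ minimizes the condition number (it minimizes $k^2+k^{-2}$), why $a$ is irrelevant to the monotonicity in $k$ (it contributes only the constant factor $1/(1-a^2)$), and it reuses the trace--determinant observation the paper itself records later in the "General Case" remark, so it would unify this proposition with the $m$-vector computation there. The one point worth stating explicitly, which you do handle, is that $g(r)=r+1/r+2$ has $g'(1)=0$, so strict monotonicity of $g$ on $[1,\infty)$ should be justified by $g'>0$ on $(1,\infty)$ rather than pointwise positivity at $r=1$; and that $\det S=k^2(1-a^2)>0$ under the standing assumptions so the ratio is well defined.
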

\begin{proof}
Let us look at the derivative of $f(k,a)$ with respect to $k$.
$$
f_k'(k,a) = \frac{1}{\Big(k^2 + 1 - \sqrt{(1-k^2)^2 + 4k^2a^2}\Big)^2} \times 
$$
$$
\times\Big(\Big(2k + \frac{2(k^2 - 1) 2k + 8k a^2}{2\sqrt{(1-k^2)^2 + 4k^2 a^2}}\Big)\Big(k^2 + 1 - \sqrt{(1-k^2)^2 + 4k^2a^2}\Big) -
$$
$$
- \Big(k^2 + 1 + \sqrt{(1-k^2)^2 + 4k^2a^2}\Big)\Big(2k - \frac{2(k^2 - 1) 2k + 8k a^2}{2\sqrt{(1-k^2)^2 + 4k^2 a^2}}\Big)\Big) =
$$
$$
= \frac{1}{\Big(k^2 + 1 - \sqrt{(1-k^2)^2 + 4k^2a^2}\Big)^2}\times
$$
$$
\Big(\Big(2k(k^2 + 1) - 2k\sqrt{(1-k^2)^2 + 4k^2 a^2} + (k^2 + 1) \frac{2k(k^2 -1) + 4ka^2}{\sqrt{(1-k^2)^2 + 4k^2 a^2}} - 2k(k^2 -1) - 4ka^2\Big) -
$$
$$
- \Big(2k(k^2 + 1) + 2k\sqrt{(1-k^2)^2 + 4k^2 a^2} - (k^2 + 1) \frac{2k(k^2 -1) + 4ka^2}{\sqrt{(1-k^2)^2 + 4k^2 a^2}} - 2k(k^2 -1) - 4ka^2\Big)\Big) =
$$
$$
= \frac{1}{\Big(k^2 + 1 - \sqrt{(1-k^2)^2 + 4k^2a^2}\Big)^2}\times$$
 $$\Big(- 4k\sqrt{(1-k^2)^2 + 4k^2 a^2} + 2(k^2 + 1) \frac{2k(k^2 -1) + 4ka^2}{\sqrt{(1-k^2)^2 + 4k^2 a^2}}\Big) =
$$
$$
= \frac{4k}{\Big(k^2 + 1 - \sqrt{(1-k^2)^2 + 4k^2a^2}\Big)^2}\times
$$
$$
\times \frac{-(1-k^2)^2 - 4k^2 a^2 + (k^2 + 1)(k^2 -1) + 2a^2(k^2 + 1)}{\sqrt{(1-k^2)^2 + 4k^2 a^2}} =
$$
$$
= \frac{4k}{\Big(k^2 + 1 - \sqrt{(1-k^2)^2 + 4k^2a^2}\Big)^2}
\times \frac{-(1-k^2)^2 - 2k^2 a^2 + 2a^2 - (1 + k^2)(1 - k^2)}{\sqrt{(1-k^2)^2 + 4k^2 a^2}} =
$$
$$
= \frac{4k}{\Big(k^2 + 1 - \sqrt{(1-k^2)^2 + 4k^2a^2}\Big)^2}
\times \frac{-(1-k^2)^2 + 2a^2(1 - k^2) - (1 + k^2)(1 - k^2)}{\sqrt{(1-k^2)^2 + 4k^2 a^2}} =
$$
$$
= \frac{4k(1 - k^2)}{\Big(k^2 + 1 - \sqrt{(1-k^2)^2 + 4k^2a^2}\Big)^2}
\times \frac{-(1-k^2) + 2a^2 - (1 + k^2)}{\sqrt{(1-k^2)^2 + 4k^2 a^2}} =
$$
$$
= \frac{4k(1 - k^2)}{\Big(k^2 + 1 - \sqrt{(1-k^2)^2 + 4k^2a^2}\Big)^2}
\times \frac{2(a^2 - 1)}{\sqrt{(1-k^2)^2 + 4k^2 a^2}} < 0,
$$
since $1 = \|\phi_2\|^2 = a^2 + b^2$ implies that $a^2 = 1 - b^2 < 1$, and hence $|a| < 1$. This shows as $k$ increases, the condition number decreases.
\end{proof}

\begin{proposition}
The weight $w$, given to the shorter vector, $\phi_1$, is bounded from above by $\frac{k}{a}$, where $k$ is the norm of the shorter vector.
\end{proposition}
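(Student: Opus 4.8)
The plan is to work directly from the closed form of $w$ obtained in the preceding proposition, since almost all of the labor has already been done there. Recall that $w$ is the chosen root $w_1$ of the quadratic $(1.2)$, namely
\[
w = \frac{k^2 - 1 + \sqrt{D}}{2ka}, \qquad D = (1-k^2)^2 + 4k^2a^2,
\]
and that under the standing assumptions of this section we have $k>0$ and (since both vectors lie in the same open quadrant) $a>0$. First I would clear the denominator: because $2ka>0$, the desired inequality $w\le k/a$ is equivalent, after multiplying through by $2ka$, to
\[
k^2 - 1 + \sqrt{D} \le 2k^2,
\]
that is, to $\sqrt{D}\le k^2+1$.

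Next, since both sides are nonnegative, this is in turn equivalent to $D\le (k^2+1)^2$, which I would verify by a one-line computation of the difference. Using $(k^2+1)^2 = k^4+2k^2+1$ and $D = 1 - 2k^2 + k^4 + 4k^2 a^2$, one finds
\[
(k^2+1)^2 - D = 4k^2 - 4k^2 a^2 = 4k^2(1-a^2) = 4k^2 b^2 \ge 0,
\]
where the final equality uses the normalization $a^2+b^2 = \|\phi_2\|^2 = 1$. Hence $\sqrt{D}\le k^2+1$, and reading back through the chain of equivalences yields $w\le k/a$.

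The argument rests on the single algebraic identity $(k^2+1)^2 - D = 4k^2 b^2$, so there is no genuine obstacle; the only points demanding care are the sign bookkeeping. One must invoke $k>0$ and $a>0$ to be certain that multiplying by $2ka$ preserves the direction of the inequality, and one must keep the root $w_1$ carrying the $+\sqrt{D}$ sign (the other root is negative, since the product of the two roots of $(1.2)$ equals $-ka/(ak)=-1$). It is also instructive to record where the bound is tight: equality $w = k/a$ holds precisely when $b=0$, i.e.\ when $\phi_2$ is parallel to $\phi_1$ and the configuration degenerates to a single direction, which matches the expected geometry.
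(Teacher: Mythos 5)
Your proof is correct, but it takes a different route from the paper's. The paper's argument is a two-line appeal to positivity of the spectrum: since the frame operator of a spanning pair is positive definite, the second eigenvalue $\lambda_2 = k^2 - wka$ must be positive, and dividing by $ka>0$ immediately gives $w < k/a$. You instead substitute the closed form $w=(k^2-1+\sqrt{D})/(2ka)$ and reduce the claim to the algebraic identity $(k^2+1)^2 - D = 4k^2b^2 \ge 0$. The paper's route is shorter and more conceptual, but it quietly relies on knowing $\lambda_2>0$, which in turn requires that $\phi_1,\phi_2$ actually span $\RR^2$ (i.e.\ $b\neq 0$); your computation is self-contained, makes the sign bookkeeping (multiplying by $2ka>0$, choosing the root with $+\sqrt{D}$) explicit, and as a bonus isolates the equality case $b=0$, showing the bound degrades to $w=k/a$ exactly when the two vectors become parallel and the frame degenerates. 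Note that your version establishes $w\le k/a$ in general and $w<k/a$ under the frame hypothesis, which is consistent with (and slightly sharper in its bookkeeping than) the strict inequality the paper derives.
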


\begin{proof}
Since both eigenvalues must be positive, $\lambda_1 = 1 + awk > 0$ is trivial. For $\lambda_2$, we have $k^2 - wka > 0 \Longrightarrow \frac{k}{a} > w$.
\end{proof}

At this point, we know for any frame of two vectors in $\mathbb{R}^2$, all else constant, bringing the norms of the vectors together reduces the condition number.

\begin{theorem}
To minimize the condition number $\frac{B}{A}$, the vectors $\phi_1, \phi_2$ need to have the same length. Therefore, the condition number is $\frac{1 + |<\phi_1,\phi_2>|}{1 - |<\phi_1,\phi_2>|}$.
\end{theorem}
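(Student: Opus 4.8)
The plan is to reduce the minimization to the one-variable monotonicity already proved in this section and then read off the optimal configuration. First I would note that the condition number is invariant under multiplying both frame vectors by a common positive scalar $t$: this replaces the frame operator $S$ by $t^2 S$, scaling both eigenvalues $\lambda_1,\lambda_2$ by $t^2$ and hence leaving the ratio $\lambda_1/\lambda_2$ unchanged. Consequently $\frac{B}{A}=f(k,a)$ depends on the two vectors only through the angle between them (encoded by $a$) and the ratio of their lengths. After normalizing the longer vector to length $1$ and rotating it into the standing frame of this section, we are exactly in the setup with $\phi_1=(k,0)$, $\phi_2=(a,b)$, $\|\phi_2\|=1$, and $k\in(0,1]$, with $a$ fixed once the angle is fixed.

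Next I would invoke the preceding proposition, which shows that $f(k,a)$ is strictly decreasing in $k$ on $(0,1]$ for fixed $a$. Hence the minimum of the condition number over the admissible ratios is attained at the right endpoint $k=1$, that is, when $\|\phi_1\|=\|\phi_2\|=1$ and the two vectors have equal length. This establishes the first assertion of the theorem.

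Finally I would evaluate the closed form at $k=1$. Substituting $k=1$ into
\[
f(k,a)=\frac{k^2+1+\sqrt{(1-k^2)^2+4k^2a^2}}{k^2+1-\sqrt{(1-k^2)^2+4k^2a^2}}
\]
collapses the discriminant to $\sqrt{4a^2}=2|a|$, giving $f(1,a)=\frac{2+2|a|}{2-2|a|}=\frac{1+|a|}{1-|a|}$, which is well defined since $a^2<1$ by the standing assumption. At $k=1$ both vectors are unit, so $\langle\phi_1,\phi_2\rangle=a$ and $|a|=|\langle\phi_1,\phi_2\rangle|$; thus the minimal condition number is $\frac{1+|\langle\phi_1,\phi_2\rangle|}{1-|\langle\phi_1,\phi_2\rangle|}$, as claimed.

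The one genuine obstacle is that the monotonicity proposition only covers $k\in(0,1]$, so I must confirm that the complementary regime, in which the originally shorter vector is scaled past the other one, cannot do better. This is handled by the length symmetry: a direct simplification shows $f(k,a)=f(1/k,a)$ (the condition number depends on the lengths only through the symmetric ratio), so the behavior for $k>1$ simply mirrors that for $k<1$. Combined with the one-sided monotonicity, this forces the global minimum over all scalings to sit exactly at equal lengths $k=1$ rather than at any interior point.
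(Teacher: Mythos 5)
Your proposal is correct and follows essentially the same route as the paper: invoke the monotone decrease of $f(k,a)$ in $k$ from the preceding proposition, push $k$ to $1$ to get equal norms, and then read off the condition number $\frac{1+|a|}{1-|a|}$ (the paper does this via $w=1$ and eigenvalues $1\pm a$, you by substituting $k=1$ into the closed form --- the same computation). Your added check of the regime $k>1$ via the symmetry $f(k,a)=f(1/k,a)$ is a small but worthwhile tightening that the paper handles only implicitly through its standing assumption $\|\phi_1\|\le\|\phi_2\|$.
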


\begin{proof}
From above, since we are scaling the vectors, we are looking for values of $k$ to minimize the condition number. Since the condition number is a decreasing function of $k$, it follows we need to scale it to $\sup k = 1$, which by the previously established assumptions make the frame equal norm. Now that the frame is equal norm, $k = w =1$ ($w = 1$ follows either from $(1.1)$ or $(1.2)$), and the eigenvalues are $1 + a, 1 - a$. Since $\phi_1 = (1, 0)$, $<\phi_1, \phi_2> = a$. This also shows with equal norm frames, a smaller inner product between the two vectors will produce a smaller condition number.
\end{proof}

\begin{remark}\label{R1}
We have shown above that for a two vector frame as we move the length
of one vector towards the length of the other, the condition number
decreases.
\end{remark}

\begin{corollary}
Given $\phi_1, \phi_2$ with $a=\|\phi_1\|<\|\phi_2\|=b$, if we can only move them by $\varepsilon$, then the minimum condition number is:
\begin{enumerate}
\item If $b-a \le 2\epsilon$ then $(1+\delta)\phi_1,(1-\delta)\phi_2$ where $(1+\delta)a = (1-\delta)b$, gives the minimal
condition number.
\item If $b-a\ge 2 \epsilon$ then $(1+\epsilon)\phi_1,(1-\epsilon)\phi_2$ gives the minimal condition number.
\end{enumerate}
\end{corollary}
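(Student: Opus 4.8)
The plan is to collapse the two-dimensional search over scaling pairs $(s_1,s_2)\in[1-\e,1+\e]^2$ into a one-parameter question about the ratio of the new lengths, and then to read off the answer from the monotonicity recorded in Remark~\ref{R1}. Write the scaled frame as $\{s_1\phi_1,s_2\phi_2\}$. Since scaling does not rotate either vector, the normalized inner product $c:=\langle\phi_1,\phi_2\rangle/(\|\phi_1\|\,\|\phi_2\|)$ is unaffected, and since multiplying both vectors by a common constant multiplies $S$ by the square of that constant and hence leaves $B/A$ fixed, the condition number of $\{s_1\phi_1,s_2\phi_2\}$ is a function of the single variable $\rho:=(s_1a)/(s_2b)$ alone (with $c$ held fixed); call it $g(\rho)$.

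First I would pin down the shape of $g$. By the equal-length theorem proved just above, for fixed $c$ the condition number attains its global minimum $\frac{1+|c|}{1-|c|}$ exactly at $\rho=1$; and by the strict inequality $f_k'<0$ from the preceding proposition (equivalently, the content of Remark~\ref{R1}) the map $g$ is strictly decreasing on $(0,1]$, while by interchanging the roles of the two vectors it is strictly increasing on $[1,\infty)$. Hence minimizing the condition number over admissible scalings is exactly the problem of choosing $(s_1,s_2)$ so that $\rho$ is as close to $1$ as the constraints permit.

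Next I would record the admissible range of $\rho$: as $s_1,s_2$ sweep $[1-\e,1+\e]$ the lengths $s_1a$ and $s_2b$ fill $[(1-\e)a,(1+\e)a]$ and $[(1-\e)b,(1+\e)b]$, so $\rho$ ranges over $\big[\tfrac{(1-\e)a}{(1+\e)b},\tfrac{(1+\e)a}{(1-\e)b}\big]$. Because $a<b$, driving $\rho$ toward $1$ means enlarging the shorter vector and shrinking the longer one, and the case split is governed by whether $\rho=1$ is reachable, i.e.\ whether the two length-intervals overlap. They meet precisely when $(1+\e)a\ge(1-\e)b$; in that regime one solves $(1+\delta)a=(1-\delta)b$ for some $\delta\le\e$, attains $\rho=1$, and so realizes the global minimum $\frac{1+|c|}{1-|c|}$ (Case 1). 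When the intervals are disjoint every admissible $\rho$ satisfies $\rho<1$, so by the monotonicity of $g$ the best choice is the largest attainable ratio $\rho=\tfrac{(1+\e)a}{(1-\e)b}$, delivered by $s_1=1+\e$ and $s_2=1-\e$ (Case 2).

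The main obstacle is the rigorous justification of the reduction step, namely that one should always push the shorter length up and the longer length down and, in Case 2, run each all the way to its boundary rather than halting at an interior pair. This is exactly the content of the strict sign $f_k'<0$ from the earlier proposition, so the remaining work is to rephrase the present length-ratio variable in the normalized coordinates ($\|\phi_2\|=1$, shorter length $k\in(0,1]$) used there, invoke strict monotonicity, and then dispatch the elementary solution of $(1+\delta)a=(1-\delta)b$; these last steps are routine.
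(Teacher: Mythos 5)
Your argument is correct and is essentially the one the paper intends: the corollary is stated without proof immediately after Remark \ref{R1}, and your reduction to the single length-ratio variable (using that a common rescaling leaves $B/A$ fixed) together with the strict monotonicity $f_k'<0$ from the preceding proposition is exactly the justification the paper leaves implicit. The one caveat is that your (correct) reachability condition $(1+\epsilon)a\ge(1-\epsilon)b$, i.e.\ $b-a\le\epsilon(a+b)$, agrees with the stated threshold $b-a\le 2\epsilon$ only when $a+b=2$; this is an imprecision in the corollary's statement (multiplicative scaling paired with an additive-looking cutoff), not a flaw in your proof.
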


\section{Three Vectors}

We first see how to scale three vectors.  This argument is due to \cite{KO}.

\begin{proposition}
Given $\Phi=\{\phi_i\}_{i=1}^3$ in $\RR^2$ so that with any change of signs, $\{\pm \phi_i\}_{i=1}^3$ do not lie in a quadrant, then $\Phi$ is scalable.
\end{proposition}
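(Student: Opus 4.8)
The plan is to reduce the problem to a statement in convex geometry via the "diagram vectors" and then exploit the angle-doubling that these vectors encode.

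First I would translate tightness into a linear condition on the squared scaling factors. By Theorem \ref{scalability_conditions}, if we scale $\phi_i=(a_i,b_i)$ to $c_i\phi_i$ with $c_i\ge 0$ and set $t_i=c_i^2$, the scaled family is tight precisely when
\[\sum_{i=1}^3 t_i(a_i^2-b_i^2)=0 \quad\text{and}\quad \sum_{i=1}^3 t_i\,2a_ib_i=0.\]
Writing $\phi_i=\|\phi_i\|(\cos\theta_i,\sin\theta_i)$, these two equations say exactly that $\sum_i t_i v_i=0$ for the diagram vectors $v_i:=(a_i^2-b_i^2,\,2a_ib_i)=\|\phi_i\|^2(\cos 2\theta_i,\sin 2\theta_i)$. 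Thus $\Phi$ is scalable iff there are nonnegative, not-all-zero weights $t_i$ with $\sum_i t_i v_i=0$, i.e. iff the origin lies in the convex hull of $\{v_1,v_2,v_3\}$, equivalently iff the $v_i$ do not all lie in an open half-plane through $0$ (finite-set strict separation). Note $v_i$ is unchanged if we replace $\phi_i$ by $-\phi_i$, matching the sign-invariance in the hypothesis.

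Next I would match the geometric hypothesis to this half-plane condition through the map $\theta_i\mapsto 2\theta_i$. Each $\phi_i$ determines a line, i.e. a point $\theta_i\in[0,\pi)$ on the circle $\RR/\pi\ZZ$, and a choice of signs $\pm\phi_i$ fits all three vectors into some open quadrant exactly when the three points $\theta_i$ lie in a common open arc of length $\pi/2$. Doubling sends such an arc of length $\pi/2$ to an arc of length $\pi$ on the circle $\RR/2\pi\ZZ$, that is, to an open semicircle, which is exactly an open half-plane of directions for the $v_i$. Hence "some sign change places the $\phi_i$ in an open quadrant" is equivalent to "the $v_i$ lie in an open half-plane." The hypothesis that no sign change achieves this is therefore equivalent to the $v_i$ not lying in any open half-plane, which by the previous paragraph is exactly scalability.

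Finally I would produce an honest tight frame from the weights. For three points on a circle that are not contained in an open semicircle, the origin lies in their convex hull; if it lies in the interior of the triangle we obtain strictly positive weights $t_1,t_2,t_3$ and the scaling $c_i=\sqrt{t_i}>0$ yields a tight frame with all three vectors retained. The only boundary case is when two of the $v_i$ are antipodal, which means two of the original vectors are orthogonal; scaling that pair to equal length already gives a tight frame, with the third vector free. I expect the main obstacle to be the second paragraph: making the angle-doubling correspondence between "quadrant" and "half-plane" precise, and keeping careful track of open versus closed arcs so that the strict-separation characterization of the convex hull lines up exactly with the hypothesis. The arithmetic of the linear system is routine; the content is entirely in this geometric dictionary.
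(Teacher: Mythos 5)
Your argument is correct, but it follows a genuinely different route from the paper. The paper's proof (following \cite{KO}) is constructive: after sign changes and a rotation it places $\phi_1,\phi_3$ in quadrants one and four with $\langle\phi_1,\phi_3\rangle<0$ and $\phi_2$ between them, chooses $c>0$ with $\langle\phi_1,\phi_2\rangle\langle\phi_2,\phi_3\rangle=-c\langle\phi_1,\phi_3\rangle$, writes down explicit scaling coefficients, and verifies they yield a Parseval frame by lifting the three vectors to mutually orthogonal vectors in $\RR^2\oplus\RR$ and projecting back. You instead pass to the diagram vectors $v_i=\|\phi_i\|^2(\cos 2\theta_i,\sin 2\theta_i)$, observe via Theorem \ref{scalability_conditions} that scalability is the linear feasibility condition $\sum_i t_i v_i=0$ with $t_i\ge 0$ not all zero, and translate the quadrant hypothesis into the statement that the $v_i$ are not confined to an open half-plane, so that $0$ lies in their convex hull; this is essentially the convex-geometry approach of \cite{CKL} and \cite{KOP}. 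What your route buys is conceptual clarity and generality: it extends verbatim to $m$ vectors in $\RR^2$, it makes the open-versus-closed quadrant distinction (and hence the role of the orthogonal-pair boundary case) completely transparent, and your observation that two antipodal diagram vectors correspond to two orthogonal frame vectors correctly disposes of the case where a weight must vanish. What the paper's route buys is an explicit closed-form Parseval scaling with all three weights strictly positive, whereas your weights come from solving a small linear system and are only guaranteed positive when $0$ is interior to the triangle. One small imprecision: you speak of ``three points on a circle,'' but the $v_i$ generally have different norms $\|\phi_i\|^2$; this is harmless because the half-plane condition and the separation argument (Gordan's theorem) depend only on directions, but you should phrase the final step for an arbitrary finite set of nonzero vectors rather than for points on a common circle.
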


\begin{proof}
By switchiong to $\pm \phi_i$, reindexing, and rotating if necessary, we may assume $e_1=(1,0)$ is an eigenvector and 
$\phi_1$ is in quadrant 1, $\phi_3$ is in quadrant 4, $\phi_2$ lies between these two, and the angle between $\phi_1,\phi_3$
is greater than $\pi$.  I.e. $\langle \phi_1,\phi_3\rangle <0$.  Now, $\langle \phi_1,\phi_2\rangle \langle \phi_2,\phi_3\rangle >0$.
I.e. Otherwise, $\{\phi_1,\phi_2,-\phi_3\}$ lie in a quadrant.  Choose $c>0$ with
\[ \langle \phi_1,\phi_2\rangle \langle \phi_2,\phi_3\rangle = -c\langle \phi_1,\phi_3\rangle.\]
\noindent {\bf Claim}:
\[ \left \{ \frac{c}{\sqrt{c^2\|\phi_1\|^2+|\langle \phi_1,\phi_2\rangle|^2}}\phi_1, \frac{1}{\sqrt{\|\phi_2\|^2 + c}}\phi_2,
\frac{c}{\sqrt{c^2\|\phi_3\|^2+|\langle \phi_2,\phi_3\rangle|^2}}\phi_3
\right \}\]
is a Parseval frame.
\vskip12pt
To see this, define $\{\psi_i\}_{i=1}^3$ in $\RR^2 \oplus \RR$ by:
\begin{enumerate}
\item \[ \psi_1= \phi_1\oplus (-\frac{1}{\sqrt{c}}\langle \phi_1,\phi_2\rangle\]
\item \[ \psi_2= \phi_2\oplus \sqrt{c}\]
\item \[ \psi_3= \phi_3\oplus (-\frac{1}{\sqrt{c}}\langle \phi_2,\phi_3\rangle\]
\end{enumerate}
Now
\[ \psi_1,\psi_2\rangle = \phi_1,\phi_2\rangle - \sqrt{c}\frac{1}{\sqrt{c}}\phi_1,\phi_2,\rangle =0.\]
\[ \langle \psi_2,\psi_3\rangle = \phi_2,\phi_3\rangle - \sqrt{c}\frac{1}{sqrt{c}}\phi_2,\phi_3\rangle =0.\]
\begin{align}
\langle \psi_1,\psi_3\rangle &= \phi_1,\phi_3\rangle -\frac{1}{\sqrt{c}}\langle \phi_1,\phi_2\rangle(-\frac{1}{\sqrt{c}})\langle
\phi_2,\phi_3\rangle\\
&= \langle \phi_1,l\phi_3\rangle + \frac{1}{c}\phi_1,\phi_2\rangle \langle \phi_2,\phi_3\rangle\\
&= \phi_1,\phi_3\rangle +\frac{1}{c}(-c\langle\phi_1,\phi_3\rangle=0
\end{align}
Since $\{\psi_i\}_{i=1}^3$ is orthogonal, normalizing it makes it an orthonormal basis of $\RR^3$ and so projecting these
vectors onto the first two coordinates is a Parseval frame - which is our set of vectors.
\end{proof}

\section{The General Case}

\begin{corollary}
If we have m vectors in $\RR^2$ and three (with any change in signs) do not lie in a quadrant, then we can scale these three to
be tight and the rest to be zero and we get a tight frame.
\end{corollary}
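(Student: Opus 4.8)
The plan is to reduce the statement directly to the preceding Proposition, treating it as a genuine corollary rather than proving anything new. By hypothesis there are three indices, say after reindexing $1,2,3$, such that for every choice of signs the vectors $\{\pm\phi_1,\pm\phi_2,\pm\phi_3\}$ do not lie in a quadrant. I would first apply the previous Proposition to the subfamily $\{\phi_1,\phi_2,\phi_3\}$: this triple satisfies exactly the non-quadrant hypothesis required there, so it is scalable, and the Proposition in fact produces explicit positive scalars $c_1,c_2,c_3>0$ for which $\{c_1\phi_1,c_2\phi_2,c_3\phi_3\}$ is a Parseval frame for $\RR^2$.

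Next I would extend this scaling to the full family by keeping these three coefficients and setting the remaining $m-3$ coefficients equal to $0$; that is, I scale $\phi_i$ by $c_i$ for $i\in\{1,2,3\}$ and by $0$ for $i\geq 4$. This is permitted under the notion of scaling in force here, since the introduction already allows frame vectors to be sent to zero. The key observation is that the frame operator of the whole scaled family agrees with that of the three scaled vectors: writing $S\phi=\sum_{i=1}^m a_i^2\langle\phi,\phi_i\rangle\phi_i$, every term with $i\geq 4$ vanishes because its coefficient is $0$, so $S$ coincides with the frame operator of $\{c_1\phi_1,c_2\phi_2,c_3\phi_3\}$, which is the identity. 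Hence the full scaled family is Parseval, and in particular tight.

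I would close by noting that the output is genuinely a frame with positive lower bound: because $\{c_1\phi_1,c_2\phi_2,c_3\phi_3\}$ is Parseval it already spans $\RR^2$, so $S$ is positive definite and adjoining the zero vectors does not disturb this. The step requiring the most care is therefore not any computation but the bookkeeping verification that appending zero vectors leaves the frame operator — and hence tightness — unchanged, combined with the explicit acknowledgement that scaling by the coefficient $0$ is allowed. The substantive work of producing a tight scaling of the triple has already been carried out in the preceding Proposition, so I do not expect any real obstacle beyond this.
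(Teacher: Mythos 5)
Your argument is correct and is exactly the route the paper intends: the paper states this corollary without proof, treating it as an immediate consequence of the preceding proposition, and your two steps (scale the non-quadrant triple to a Parseval frame, then observe that zeroing the remaining vectors leaves the frame operator unchanged) supply precisely the omitted bookkeeping. Nothing further is needed.
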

\begin{remark}
For any frame in $\RR^2$, the frame operator can be expressed as
\begin{align*}
    \begin{bmatrix}
    \sum_{i=1}^m \phi^2_{i,1} & \sum_{i=1}^m\phi_{i,1}\phi_{i,2}\\
     \sum_{i=1}^m\phi_{i,1}\phi_{i,2} & \sum_{i=1}^m \phi^2_{i,2}
    \end{bmatrix}
\end{align*}
Furthermore, the eigenvalues for any 2 by 2 matrix $A$ are the roots to the polynomial $x^2 - ~Trace~(A) + Det(A)$. 
\end{remark}

\begin{proposition}
For a frame in $\RR^2$ with $m$ vectors, all else held constant, the desired scaling for the vector $\phi_m = (\sqrt{x},0)$ is $$ x = \max\left( \sum_{i=1}^{m-1}\phi^2_{i,2} - \sum_{i=1}^{m-1}\phi^2_{i,1} + \frac{2(\sum_{i=1}^{m-1}\phi_{i,1}\phi_{i,2})^2}{\sum_{i=1}^{m-1} \phi^2_{i,2}},0 \right)$$
\end{proposition}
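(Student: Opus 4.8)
The plan is to reduce this to a single-variable calculus problem. First I would isolate the vector being scaled: since $\phi_m = (\sqrt{x},0)^T$ contributes exactly $x$ to the top-left entry of the frame operator and nothing to the other three entries, I set
\[
P = \sum_{i=1}^{m-1}\phi_{i,1}^2,\qquad Q = \sum_{i=1}^{m-1}\phi_{i,2}^2,\qquad R = \sum_{i=1}^{m-1}\phi_{i,1}\phi_{i,2},
\]
so that the frame operator from the preceding remark becomes
\[
S = \begin{bmatrix} P + x & R \\ R & Q \end{bmatrix}.
\]
Feeding $\tr(S) = P+x+Q$ and $\det(S) = (P+x)Q - R^2$ into the quadratic from the remark, the discriminant simplifies to $((P+x)-Q)^2 + 4R^2$, so writing $t := P+x \geq P$ the eigenvalues are $\lambda_{\pm} = \tfrac{1}{2}\big[(t+Q) \pm \sqrt{(t-Q)^2 + 4R^2}\big]$.

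Next I would recast the condition number $\lambda_+/\lambda_-$ so that the quantity to be optimized is exposed. Setting $r = \frac{\sqrt{(t-Q)^2+4R^2}}{t+Q}$, one has $\lambda_+/\lambda_- = \frac{1+r}{1-r}$, which is strictly increasing in $r$ on $[0,1)$; here $r<1$ precisely because $S$ is positive definite, which holds since $\{\phi_i\}_{i=1}^m$ is a frame. Hence minimizing the condition number over the allowed scalings is equivalent to minimizing
\[
g(t) = \frac{(t-Q)^2 + 4R^2}{(t+Q)^2}
\]
over $t \geq P$.

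The computation then proceeds directly. Differentiating $g$ and clearing the positive factor $(t+Q)^3$, the stationarity condition collapses to $2Q(t-Q) - 4R^2 = 0$, i.e. $t = Q + \frac{2R^2}{Q}$; inspecting the sign of $g'$ on either side (negative below, positive above) confirms this is the unique interior minimum, and it requires $Q>0$, which is guaranteed because a frame must span $\RR^2$. Translating back through $t = P+x$ gives the unconstrained optimizer $x = Q - P + \frac{2R^2}{Q}$, which is exactly the expression appearing inside the $\max$ in the statement.

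Finally I would impose the physical constraint $x = \|\phi_m\|^2 \geq 0$. If the interior critical value $Q - P + \frac{2R^2}{Q}$ is nonnegative, it is feasible and is the answer; if it is negative, then $g$ is increasing throughout the feasible interval $[P,\infty)$, so the minimum is attained at the boundary $x=0$. Combining both cases produces the stated $\max(\,\cdot\,,0)$. I expect the only genuine subtlety to be the first reduction step — verifying that $g$ faithfully tracks the condition number and that $r$ stays strictly below $1$ — since after that everything is a routine derivative and a boundary check.
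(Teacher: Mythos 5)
Your proposal is correct and follows essentially the same route as the paper: write the condition number through the trace and discriminant of the $2\times 2$ frame operator, reduce to single-variable calculus in $x$, and solve the stationarity condition $2Q(t-Q)=4R^2$ to get $t=Q+\frac{2R^2}{Q}$, which is exactly the paper's $a=\frac{b^2+2c}{b}$ under $a=t$, $b=Q$, $c=R^2$. Your monotone substitution $r\mapsto\frac{1+r}{1-r}$ is only a cosmetic streamlining of the paper's direct differentiation, though your explicit boundary check producing the $\max(\cdot,0)$ is a point the paper's own proof leaves implicit.
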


\begin{proof}
Let $a = a(x) = \sum_{i=1}^{m-1} \phi^2_{i,1}$, where $x$ is the square of the first component of the vector of interest, $b =\sum_{i=1}^m \phi^2_{i,2}$, $c =  (\sum_{i=1}^{m-1} \phi_{i,1}\phi_{i,2})^2$, we have
\begin{align*}
    \frac{B}{A} &= \frac{(a+b)+ \sqrt{(a+b)^2 - 4(ab - c)}}{(a+b)- \sqrt{(a+b)^2 - 4(ab - c)}}\\
    &= \frac{(a+b)+ \sqrt{a^2+2ab+b^2 - 4ab + 4c}}{(a+b)-\sqrt{a^2+2ab+b^2 - 4ab + 4c}}\\
    &= \frac{(a+b)+ \sqrt{(a-b)^2 + 4c}}{(a+b)-\sqrt{(a-b)^2 + 4c}}
\end{align*}
The last line above shows that the condition number is 1 will only happen if the column are orthogonal and if the square difference is zero, which follows from previous proven results.

Let $f(x) = \frac{(a(x)+b)+ \sqrt{(a(x)-b)^2 + 4c}}{(a(x)+b)-\sqrt{(a(x)-b)^2 + 4c}}$, where $a(x) = x + \sum \phi^2_{i,1}$ is a linear function of $x$ and $b,c$ are all constants independent of $x$, differentiating $f$ with respect to $x$ gives:
\begin{align*}
f'&= \frac{1}{(a+b)-\sqrt{(a-b)^2+4c)^2}}\times\\
&  [(1+\frac{a-b}{\sqrt{(a-b)^2+4c}})(a+b-\sqrt{(a-b)^2+4c})\\
&-(1-\frac{a-b}{\sqrt{(a-b)^2+4c}}+(a+b+\sqrt{(a-b)^2+4c})]
\end{align*}

\begin{align*}
    Top
    &= \left [a+b +  \frac{a^2-b^2}{\sqrt{(a-b)^2 + 4c}} - \sqrt{(a-b)^2 + 4c} - a+ b\right] \\
&- \left[ a+b -  \frac{a^2-b^2}{\sqrt{(a-b)^2 + 4c}} + \sqrt{(a-b)^2 + 4c} - a + b\right]\\
    &= \left[2b +  \frac{a^2-b^2}{\sqrt{(a-b)^2 + 4c}} - \sqrt{(a-b)^2 + 4c}  \right]\\
& - \left[ 2b -  \frac{a^2-b^2}{\sqrt{(a-b)^2 + 4c}} + \sqrt{(a-b)^2 + 4c}\right]\\
    &= 2\left (\frac{a^2-b^2}{\sqrt{(a-b)^2 + 4c}} - \sqrt{(a-b)^2 + 4c} \right)
\end{align*}
We are interested in when the Top part of $f'$ is zero, thus:
\begin{align*}
    0 & = \frac{a^2-b^2}{\sqrt{(a-b)^2 + 4c}} - \sqrt{(a-b)^2 + 4c}\\
    0&= a^2 - b^2 - ((a-b)^2 + 4c)\\
    0&= a^2 - b^2 - (a^2 -2ab + b^2 + 4c)\\
    0&= 2ab - 2b^2 - 4c\\
    a&= \frac{b^2 + 2c}{b}
\end{align*}
Since $a(x) = \sum_{i=1}^m \phi^2_{i,1}$, it is a linear function of $x$ with $\sum_{i=1}^{m-1} \phi^2_{i,1}$, back substituting the other constants, we have:
\begin{align*}
    x + \sum_{i=1}^{m-1} \phi^2_{i,1} &= (\frac{\sum_{i=1}^{m-1} \phi^2_{i,2})^2 + 2(\sum_{i=1}^{m-1}\phi_{i,1}\phi_{i,2})^2}{\sum_{i=1}^{m-1} \phi^2_{i,2}}\\
    x &= \sum_{i=1}^{m-1}\phi^2_{i,2} - \sum_{i=1}^{m-1}\phi^2_{i,1} + \frac{2(\sum_{i=1}^{m-1}\phi_{i,1}\phi_{i,2})^2}{\sum_{i=1}^{m-1} \phi^2_{i,2}} 
\end{align*}
\end{proof}
\begin{proposition}
Let $\{\Phi_i\}_{i=1}^m$ be an unscalable frame with $\phi_m = (\sqrt{x},0)$ and the other vectors are equal norm with $\Vert \phi_i \Vert^2 = k^2$, suppose we can't scale under $\Vert \phi \Vert^2 =k$, then we want to scale $\phi_m$ to $(k,0)$, making the frame equal norm. 
\end{proposition}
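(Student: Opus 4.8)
The plan is to regard the condition number as a function of the single variable $x=\|\phi_m\|^2$ and to lean on the preceding proposition, which already produced the unconstrained minimizer. Writing $a(x)=x+\sum_{i=1}^{m-1}\phi_{i,1}^2$, $b=\sum_{i=1}^{m-1}\phi_{i,2}^2$, and $c=\big(\sum_{i=1}^{m-1}\phi_{i,1}\phi_{i,2}\big)^2$, that computation gives
\[ f(x)=\frac{(a(x)+b)+\sqrt{(a(x)-b)^2+4c}}{(a(x)+b)-\sqrt{(a(x)-b)^2+4c}}, \]
and in the course of it shows that $f'(x)$ has the same sign as $a(x)b-b^2-2c$. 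Since $a(x)$ is strictly increasing in $x$ and the denominator of $f$ is positive for a genuine frame, this says $f$ is strictly decreasing below the critical value $x^\ast$ determined by $a(x^\ast)b=b^2+2c$ and strictly increasing above it; that is, $f$ is a single valley whose floor sits at the $x^\ast$ of the preceding proposition. Consequently, minimizing the condition number over any feasible range of scalings reduces to comparing the endpoints of that range with $x^\ast$.

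First I would feed in the equal-norm hypothesis $\phi_{i,1}^2+\phi_{i,2}^2=k^2$ for $1\le i\le m-1$, so that $\sum_{i=1}^{m-1}\phi_{i,1}^2=(m-1)k^2-b$. Using the closed form $x^\ast=b+\tfrac{2c}{b}-\sum_{i=1}^{m-1}\phi_{i,1}^2$ and clearing the positive factor $b$, the inequality $x^\ast\le k^2$ becomes equivalent to
\[ 2\Big(\textstyle\sum_{i=1}^{m-1}\phi_{i,1}\phi_{i,2}\Big)^2\le b\,\big(mk^2-2b\big). \]
The scaling restriction says we may not shrink any vector below norm $k$, so the feasible set for $x$ is $\{x\ge k^2\}$. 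Granting the displayed bound we get $x^\ast\le k^2$, whence by the valley structure $f$ is increasing on $[k^2,\infty)$ and attains its minimum over the feasible set at the left endpoint $x=k^2$. This forces $\phi_m=(k,0)$ and, together with the equal-norm hypothesis on the remaining vectors, makes the whole family equal norm, which is the assertion; the conclusion is the multi-vector analogue of the two-vector monotonicity recorded in Remark \ref{R1}.

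The hard part will be the displayed inequality, that is, controlling the off-diagonal sum $\sum\phi_{i,1}\phi_{i,2}$ against $b=\sum\phi_{i,2}^2$. This is precisely where the unscalability hypothesis must enter: by the opening proposition an unscalable family in $\RR^2$ may, after a change of signs and a rotation fixing the eigendirection $e_1$ of $\phi_m$, be placed in one closed quadrant, so that every $\phi_{i,1},\phi_{i,2}$ can be taken nonnegative and none of the products cancel. I would then parametrize $\phi_i=(k\cos\theta_i,k\sin\theta_i)$ with $\theta_i\in[0,\pi/2]$ and reduce the inequality to a relation between $\sum\sin^2\theta_i$ and $\sum\sin\theta_i\cos\theta_i$, checking the $m=2$ case (where it holds with equality, recovering the two-vector theorem) as the honest base case. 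I expect the genuine obstacle to be that this estimate is sensitive to how the $\theta_i$ are distributed in the quadrant: a bare Cauchy--Schwarz bound $R^2\le \big(\sum\phi_{i,1}^2\big)b$ yields only the weaker $x^\ast\le (m-1)k^2$, so closing the gap to $x^\ast\le k^2$ will require the quadrant confinement — and very likely the standing convention that $\phi_m$ is the distinguished, shortest vector — in an essential way.
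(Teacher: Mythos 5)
Your setup is sound and matches the paper's own argument: the sign of $f'(x)$ is that of $a(x)b-b^2-2c$, which is increasing in $x$, so $f$ decreases up to the critical point $x^\ast$ with $a(x^\ast)=b+2c/b$ and increases after it, and the whole proposition therefore reduces to showing $x^\ast\le k^2$, i.e. your displayed inequality $2\big(\sum_{i=1}^{m-1}\phi_{i,1}\phi_{i,2}\big)^2\le b\,(mk^2-2b)$. But you never prove that inequality — you explicitly defer it as ``the hard part'' — and it is the entire content of the statement. The paper's proof is no more complete here (it simply asserts that $ab-b^2-2c>0$ once $x>k^2$), so you have reproduced the paper's reduction without closing the gap that the paper also leaves open.

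The more serious problem is that the deferred inequality is false, so no amount of work on your final step can succeed. Take $m=3$ with $\phi_1=\phi_2=(k/\sqrt2,\,k/\sqrt2)$ and $\phi_3=(\sqrt{x},0)$. This family is unscalable (any nonnegative weights killing $\sum w_i^2\phi_{i,1}\phi_{i,2}$ must annihilate $\phi_1,\phi_2$), the first two vectors are equal norm $k$, and it lies entirely in the closed first quadrant — so your proposed rescue via quadrant confinement is unavailable, since the counterexample already satisfies that confinement. Here $b=k^2$, $c=k^4$, hence $x^\ast=b+2c/b-\sum_{i=1}^{2}\phi_{i,1}^2=3k^2-k^2=2k^2>k^2$, and your inequality reads $2k^4\le k^2(3k^2-2k^2)=k^4$, which fails. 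Concretely, at $x=k^2$ the frame operator has eigenvalues $\tfrac{3\pm\sqrt5}{2}k^2$ and condition number $\tfrac{7+3\sqrt5}{2}\approx 6.85$, while at $x=2k^2$ the eigenvalues are $(2\pm\sqrt2)k^2$ and the condition number is $3+2\sqrt2\approx 5.83$; the minimum over $x\ge k^2$ sits at $x^\ast=2k^2$, not at the equal-norm point. So the gap in your argument is not merely technical: the monotonicity claim you (and the paper) need on $[k^2,\infty)$ does not hold in general, and any correct version of this proposition would need an additional hypothesis controlling the off-diagonal sum $\sum\phi_{i,1}\phi_{i,2}$ relative to $b$.
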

\begin{proof}
If $x > k^2$, we will have an increasing derivative for the condition number, as $ab - b^2 - 2c > 0$ as $a$ increase, which is exactly what we don't want.
\end{proof}
\begin{proposition}
If we add vectors between the two outer vectors in an unscalable frame then the condition number increases.
\end{proposition}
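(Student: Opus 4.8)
The plan is to collapse the condition number to a single scalar invariant of the frame operator and then track how that invariant moves when interior vectors are inserted. Writing each frame vector in polar form $\phi_j=r_j(\cos\theta_j,\sin\theta_j)$, the Remark's formula for the frame operator together with the $2\times2$ eigenvalue computation already used shows that the two eigenvalues are $\tfrac{1}{2}(W\pm|V|)$, where $W=\sum_j r_j^2$ is the trace and $V=\sum_j r_j^2(\cos 2\theta_j,\sin 2\theta_j)$ is the doubled-angle resultant. Indeed, in the notation $a=\sum\phi_{j,1}^2$, $b=\sum\phi_{j,2}^2$, $d=\sum\phi_{j,1}\phi_{j,2}$ one has $a+b=W$ and $(a-b,2d)=V$, so the condition-number expression $\frac{(a+b)+\sqrt{(a-b)^2+4d^2}}{(a+b)-\sqrt{(a-b)^2+4d^2}}$ derived earlier equals $\frac{W+|V|}{W-|V|}$. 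This is a strictly increasing function of $\rho:=|V|/W\in[0,1)$, so it suffices to prove that inserting interior vectors strictly increases $\rho$.

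Next I would normalize the geometry, encoding the doubled-angle plane as $\RR^2\cong\CC$ for convenience. Since the family lies (after sign changes) in a quadrant, the outer vectors have angular spread $\delta=\theta_{\max}-\theta_{\min}\in(0,\pi/2)$, and after a rotation I may place the bisector of their doubled angles on the positive real axis, so the two equal-norm outer vectors sit at doubled angles $\pm\delta$ with common mass $m_0=r^2$. Their resultant is then $V_0=m_0(e^{i\delta}+e^{-i\delta})=2m_0\cos\delta$, pointing along the positive real axis, with $|V_0|=2m_0\cos\delta$, $W_0=2m_0$, and hence $\rho_0=\cos\delta$ (consistent with the two-vector condition number $\frac{1+\cos\delta}{1-\cos\delta}$). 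Each inserted interior vector has doubled angle $\beta_j\in(-\delta,\delta)$ and mass $\mu_j=r_j^2>0$, giving the new resultant and trace $V=V_0+\sum_j\mu_j e^{i\beta_j}$ and $W=W_0+\sum_j\mu_j$.

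The heart of the argument is a one-line projection estimate. Bounding $|V|$ below by its component along the fixed direction of $V_0$ gives $|V|\ge \mathrm{Re}\,V=2m_0\cos\delta+\sum_j\mu_j\cos\beta_j$. Because $|\beta_j|<\delta<\pi/2$, each $\cos\beta_j>\cos\delta>0$, so $\mathrm{Re}\,V>\cos\delta\,(2m_0+\sum_j\mu_j)=\rho_0 W$. Dividing by $W$ yields $\rho=|V|/W>\rho_0$, and by the monotonicity from the first step the condition number strictly increases, which is the claim.

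I expect the main obstacle to be conceptual rather than computational: the naive incremental reading — ``adding one vector to an arbitrary frame raises the condition number'' — is in fact false, since if the current frame is lopsided, inserting a vector toward the sparse side rebalances it and lowers the condition number. The correct statement compares the full frame against its two-outer-vector subframe. What makes the projection estimate uniform is precisely that the outer vectors are equal norm: this forces $V_0$ onto the bisector, so the bound $\cos\beta_j>\cos\delta=\rho_0$ holds for every interior vector regardless of its mass. Were the outer norms unequal, $V_0$ would tilt toward the heavier vector and an interior vector placed near the lighter outer vector could decrease $\rho$; similarly the quadrant hypothesis $\delta<\pi/2$, which guarantees $\cos\delta>0$, is essential to the sign of the estimate.
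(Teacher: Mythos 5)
Your argument is correct for the statement it actually establishes, but it takes a genuinely different route from the paper's and, importantly, proves a differently hypothesized claim. The paper rotates the added vector to the form $(\sqrt{x},0)$ so that it perturbs only $a=\sum\phi_{i,1}^2$ in the formula $\frac{(a+b)+\sqrt{(a-b)^2+4c}}{(a+b)-\sqrt{(a-b)^2+4c}}$ and then asserts $\frac{df}{da}>0$; you instead pass to the trace $W=\sum r_j^2$ and the doubled-angle resultant $V=\sum r_j^2e^{2i\theta_j}$, rewrite the condition number as the monotone function $\frac{W+|V|}{W-|V|}$ of $\rho=|V|/W$, and obtain the increase from the projection bound $|V|\ge \mathrm{Re}\,V>\cos\delta\cdot W$. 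Your route is cleaner and, unlike the paper's, actually closes: by the paper's own preceding computation, $\frac{df}{da}$ vanishes at $a=b+2c/b$ and is negative below that threshold, so the bare claim ``$\frac{df}{da}>0$'' is not available without specifying where $a$ sits. What the resultant picture buys you is a transparent identification of exactly when the statement is true, and your closing paragraph correctly observes that the literal incremental reading --- that the enlarged frame always has larger condition number than the arbitrary frame it came from --- is false (a lopsided frame can be improved by adding mass on its sparse side; the same phenomenon defeats the paper's sign claim). The price is that your theorem compares the enlarged frame only against its two-outer-vector subframe and needs those outer vectors to have equal norm, so that $V_0$ lies on the bisector and $\rho_0=\cos\delta$, making $\cos\beta_j>\cos\delta=\rho_0$ uniform over all interior insertions. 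That is a reasonable and provable reading of the proposition, and it is what the subsequent remark and the final theorem actually use, but you should state the equal-norm-outer-vectors hypothesis and the subframe comparison explicitly, since neither appears in the proposition as written.
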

\begin{proof}
Start with a frame with $m$ vectors, it doesn't matter how you rotate it, the frame operator stays the same for the most part, so go back to the abc equation and now if you differentiate with respect to $a$, you will see $\frac{df}{da} > 0$ which means if you keep rotating the frame and add another $(\sqrt{x},0)$ vector will only affect $a$ and hence making the condition number go up. 
\end{proof}

If we add vectors outside the given vectors then the condition number might decrease.  So we are not drowning in details, we will
just outline how to construct such examples.  Start with $\phi_1=\phi_2=(\frac{1}{\sqrt{2}},\frac{1}{\sqrt{2}})$.  The upper frame bound here is 2 and the lower frame bound is 0 so the condition number is $B/A=\infty$.  Now add two vectors to this family:  
$\phi_3=(1,0),
\phi_4=(01)$.  Then the upper frame bound of these four vectors is $B'=3$ and the lower frame bound is 1 and the condition number is
$B'/A'=3$.  For our example, we need these sets to form frames and lie in a quadrant.  
If we rotate $\phi_1,\phi_4$ towards $\phi_3$ by a tiny amount, then $\{\phi_i\}_{i=1}^2$ is a frame whose condition
number is greater than
$(2-\epsilon)/\epsilon$ and $\{\phi_i\}_{i=1}^4$ lies in the open first quadrant and
the condition number is less than $(3-\epsilon)/(1-\epsilon)$ which is
significantly smaller. 

 \begin{remark}\label{R2}
What we have shown above is that if we have m vectors in the first
quadrant and add a vector interior to the two outer vectors, then as the length
of the vector increases the condition number of the vectors increases.
\end{remark}

\begin{corollary}
If $\{\phi_i\}_{i=	1}^m$ is a non-scalable frame in $\RR^2$, the minimal condition number is
\[ \frac{B}{A}= \frac{1+|\langle \phi_i, \phi_j\rangle|}{1-|\langle \phi_i, \phi_j\rangle|},\]
where
\[ |\langle \phi_i, \phi_j\rangle|= max_{n\not= k}|\langle \phi_n,\phi_k\rangle|.\]
\end{corollary}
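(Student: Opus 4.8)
The plan is to turn the $m$-vector minimization into a single scalar optimization solvable in closed form, thereby upgrading the purely local monotonicity of Remarks~\ref{R1} and~\ref{R2} into a global statement. First I would normalize: multiplying a scaling by a positive constant leaves $B/A$ unchanged, so minimizing the condition number over all positive scalings of $\{\phi_i\}$ is the same problem as over all positive scalings of the unit vectors $u_i=\phi_i/\|\phi_i\|$, and I may assume $\|\phi_i\|=1$. By the opening Proposition on quadrants together with the three-vector scalability Proposition, non-scalability forces that, after a change of signs and a rotation, each $u_i=(\cos\theta_i,\sin\theta_i)$ with all $\theta_i$ in an interval $[\theta_{\min},\theta_{\max}]$ of length $\Delta:=\theta_{\max}-\theta_{\min}<\pi/2$. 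Writing a scaling as $t_i=a_i^2\ge 0$, the frame operator is $S=\sum_i t_i u_iu_i^T$; by the Remark on $2\times 2$ matrices its eigenvalues are the roots of $x^2-(\tr S)x+\det S$, where $\tr S=\sum_i t_i$ and (by Cauchy--Binet) $\det S=\sum_{i<j}t_it_j\sin^2(\theta_i-\theta_j)$.

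Since $\tfrac{\tr^2 S}{\det S}=\tfrac{(\lambda_1+\lambda_2)^2}{\lambda_1\lambda_2}=\tfrac{B}{A}+\tfrac{A}{B}+2$ is strictly increasing in $B/A$ on the range $B\ge A$, minimizing the condition number is equivalent to maximizing $\det S/\tr^2 S=\sum_{i<j}p_ip_j\sin^2(\theta_i-\theta_j)$ over the probability weights $p_i=t_i/\sum_j t_j$. The key step is the identity, obtained from $\cos 2x=1-2\sin^2 x$,
\[ \sum_{i<j}p_ip_j\sin^2(\theta_i-\theta_j)=\tfrac14\Bigl(1-\bigl\|\sum_i p_i\zeta_i\bigr\|^2\Bigr),\qquad \zeta_i:=(\cos 2\theta_i,\sin 2\theta_i). \]
Hence the problem collapses to minimizing $\bigl\|\sum_i p_i\zeta_i\bigr\|$, i.e. to finding the point of the convex hull of $\{\zeta_i\}$ nearest the origin.

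The vectors $\zeta_i$ lie on the unit circle at angles $2\theta_i$ spanning an arc of length $2\Delta<\pi$. Because the arc is shorter than a semicircle, its hull lies in an open half-plane and the nearest point to the origin is the foot of the perpendicular onto the chord joining the two extreme points $\zeta_{\min},\zeta_{\max}$; as these have unit norm the foot is their midpoint, attained at equal weights $p_{\min}=p_{\max}=\tfrac12$ on the two outer vectors and zero elsewhere, at distance $\cos\Delta$ from the origin. Substituting $\bigl\|\sum_i p_i\zeta_i\bigr\|^2=\cos^2\Delta$ gives $\min \tr^2 S/\det S=4/\sin^2\Delta$, and solving $\tfrac{B}{A}+\tfrac{A}{B}+2=4/\sin^2\Delta$ for $B\ge A$ yields $B/A=\tfrac{1+\cos\Delta}{1-\cos\Delta}$. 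Since $\cos\Delta=\langle u_{\min},u_{\max}\rangle$ is exactly the inner product of the two outer (extreme) vectors, this is the claimed value, and the minimizer is the equal-norm two-vector frame of the Theorem in the Two Vectors section, in agreement with Remarks~\ref{R1} and~\ref{R2}.

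The main obstacle is the global optimality asserted in the third paragraph: Remark~\ref{R2} only shows that lengthening an interior vector raises the condition number, a local statement, so the real work is the convex-hull claim that the infimum over \emph{all} positive scalings is governed solely by the outermost pair. This is exactly where the quadrant hypothesis is indispensable, since it forces $2\Delta<\pi$, placing the origin outside the hull and putting the nearest point on the extreme chord; were $2\Delta\ge\pi$, the origin would lie in the hull, $\min\bigl\|\sum_i p_i\zeta_i\bigr\|=0$ would make some scaling tight, and the frame would be scalable. I would also flag that the extremal inner product governing the answer is that of the \emph{outermost} pair, i.e. the pair subtending the largest angle and hence having the \emph{smallest} $|\langle u_i,u_j\rangle|$ among unit vectors in the quadrant, and the extremum in the statement should be read accordingly.
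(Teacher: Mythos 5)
Your argument is correct, and it takes a genuinely different --- and substantially more complete --- route than the paper's. The paper gives no self-contained proof of this corollary: it is meant to follow by combining the two-vector Theorem (scale a pair to equal length, obtaining condition number $\frac{1+|\langle\cdot,\cdot\rangle|}{1-|\langle\cdot,\cdot\rangle|}$) with the local monotonicity statements of Remarks \ref{R1} and \ref{R2}, which are supposed to show that interior vectors only hurt, so that the optimal scaling annihilates them and reduces to the outermost pair. You instead solve the optimization globally and exactly: the diagram-vector identity turns minimizing $B/A$ into minimizing $\|\sum_i p_i\zeta_i\|$ over the probability simplex, with $\zeta_i=(\cos 2\theta_i,\sin 2\theta_i)$, and the quadrant hypothesis places the $\zeta_i$ on an arc shorter than a semicircle, so the closed half-plane beyond the extreme chord contains the whole hull and the nearest point to the origin is that chord's midpoint, at distance $\cos\Delta$. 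This buys the global optimality over all scalings that the paper only asserts, and it recovers the scalability criterion (origin in the hull) as a byproduct; the paper's route, by contrast, only needs elementary two-vector computations but never actually closes the gap from local monotonicity to a global minimum. Two points to tidy in your write-up: (i) the reduction to an arc of length $\Delta<\pi/2$ is best justified inside your own framework by a separating-hyperplane argument for the $\zeta_i$, since the quoted two- and three-vector propositions do not literally give it for general $m$; and (ii) as you correctly flag, the optimal pair is the one of largest angular separation, hence of \emph{smallest} $|\langle u_n,u_k\rangle|$ among the normalized vectors, so the corollary's $\max_{n\neq k}$ should read $\min_{n\neq k}$ (the paper's Conclusion states it with a min); moreover the optimizer puts zero weight on the interior vectors, so over strictly positive scalings the stated value is an infimum rather than an attained minimum.
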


\begin{theorem}
If we can only scale the frame vectors sitting in the first quadrant
of $\RR^2$ by:  $(1+\epsilon)\phi_i,(1-\epsilon)\phi_i$,  we get minimal condition number by scaling the two outside vectors until their lengths
are as close as possible to each and scale all 
other vectors by $1-\epsilon$.
\end{theorem}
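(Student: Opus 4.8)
The plan is to rewrite $B/A$ as a single scalar measuring how far $S$ is from being tight, and then to optimize the $m$ scaling weights one coordinate at a time, treating the two extreme (outer) vectors separately from the interior ones. Writing the scaling of $\phi_i$ as $c_i\in[1-\epsilon,1+\epsilon]$ and setting $t_i=c_i^2$, the $2\times2$ description of the frame operator from the Remark above has $\tr S=\sum_i t_i\norm{\phi_i}^2$, and its eigenvalues are $\tfrac12\big((P+Q)\pm\sqrt{(P-Q)^2+4R^2}\big)$ with $P=\sum_i t_i\phi_{i,1}^2$, $Q=\sum_i t_i\phi_{i,2}^2$, $R=\sum_i t_i\phi_{i,1}\phi_{i,2}$; hence $B/A=\frac{1+\rho}{1-\rho}$ with $\rho=\frac{\sqrt{(P-Q)^2+4R^2}}{P+Q}$. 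Because $t\mapsto\frac{1+t}{1-t}$ increases on $[0,1)$ and $\frac{B}{A}+\frac{A}{B}+2=\frac{(\tr S)^2}{\det S}$ with $f\mapsto f+f^{-1}$ increasing for $f\ge1$, minimizing the condition number is equivalent to minimizing $(\tr S)^2/\det S$. This form is convenient: $\tr S$ is affine in the weights, and expanding the determinant of a sum of rank-one matrices shows $\det S$ is affine in each $t_i$ with a single nonnegative cross term.

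First I would eliminate the interior vectors. For $\phi_j$ lying strictly between the two outer vectors, first-order eigenvalue perturbation gives $\partial_{t_j}\log\frac{\lambda_+}{\lambda_-}=\frac{\ip{\phi_j}{u_+}^2}{\lambda_+}-\frac{\ip{\phi_j}{u_-}^2}{\lambda_-}$, where $u_+,u_-$ are the unit eigenvectors; this is positive exactly when $\phi_j$ is more aligned with the dominant direction $u_+$ than the current value of $B/A$ permits. Since the interior vectors are angularly trapped between the two outer vectors and so lie closest to $u_+$, they always satisfy this inequality. This is precisely the monotonicity of Remark~\ref{R2}, and as it holds for every choice of the outer weights, it drives each interior weight to its lower bound. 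Thus at the optimum every interior $\phi_i$ is scaled by $1-\epsilon$.

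With the interior weights frozen at $(1-\epsilon)^2$, write $S=S_0+t_1\phi_1\phi_1^{T}+t_m\phi_m\phi_m^{T}$, where $S_0$ is the fixed, positive interior contribution and $\phi_1,\phi_m$ are the outer vectors with $t_1,t_m\in[(1-\epsilon)^2,(1+\epsilon)^2]$. Here $\tr S=\tr S_0+t_1 p_1+t_m p_m$ with $p_i=\norm{\phi_i}^2$, and $\det S=\det S_0+t_1\alpha_1+t_m\alpha_m+t_1t_m\gamma$ with $\alpha_1,\alpha_m\ge0$ and $\gamma=(\phi_{1,1}\phi_{m,2}-\phi_{1,2}\phi_{m,1})^2\ge0$. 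Setting the two partials of $(\tr S)^2/\det S$ to zero yields the interior first-order condition $p_1\,\partial_{t_m}\det S=p_m\,\partial_{t_1}\det S$. When there is no background ($S_0=0$, the pure two-vector case) this reads $t_1 p_1=t_m p_m$, i.e. $\norm{c_1\phi_1}=\norm{c_m\phi_m}$: the outer lengths should be equalized, exactly as in Remark~\ref{R1}. Intersecting the equal-length locus with the scaling box then produces the two regimes of the two-vector corollary: if equalization is reachable inside $[1-\epsilon,1+\epsilon]$ one attains equal lengths, and otherwise the box is active and both weights are pushed as far as allowed toward equalization, which is the meaning of ``as close as possible.''

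The crux, and the step I expect to be the main obstacle, is justifying the equal-length characterization once the background $S_0$ is present, since Remark~\ref{R1} is proved only for a bare two-vector frame and $S_0$ breaks the reflection symmetry that makes equal norms optimal there. Concretely, the first-order condition above locates the exact critical point at $t_1 p_1-t_m p_m=(p_m\alpha_1-p_1\alpha_m)/\gamma$, a background-dependent shift off perfect length-equality that vanishes precisely when $S_0=0$. I would control it using the explicit affine and bilinear formulas: along each line of constant outer energy $t_1 p_1+t_m p_m$ the numerator $(\tr S)^2$ is constant, so one only maximizes $\det S$, which by $\gamma\ge0$ is concave there; combining this with the box constraints pins down the optimal outer weights. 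In the background-free situation this is exactly length-equalization as claimed, and the general case reduces to showing that the shift $(p_m\alpha_1-p_1\alpha_m)/\gamma$ is reconciled with the stated balancing, in particular with the regimes $\norm{\phi_m}-\norm{\phi_1}\le2\epsilon$ and $\norm{\phi_m}-\norm{\phi_1}\ge2\epsilon$ of the two-vector corollary. Establishing this reconciliation, i.e. genuinely extending Remark~\ref{R1} past the bare two-vector frame, is the heart of the argument.
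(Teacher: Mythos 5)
Your proposal is a genuinely different and more honest route than the paper's: the paper disposes of this theorem in one line by citing Remarks~\ref{R1} and \ref{R2}, whereas you set up an explicit optimization of $(\tr S)^2/\det S$ over the box of squared weights and try to locate the critical point. But as written it has a real gap, and you name it yourself: the claim that the two outer weights should equalize $\norm{c_1\phi_1}$ and $\norm{c_m\phi_m}$ is only verified in the case $S_0=0$. Your own first-order condition shows that with a nonzero interior background the critical point sits at $t_1p_1-t_mp_m=(p_m\alpha_1-p_1\alpha_m)/\gamma$, i.e.\ off the equal-length locus, and you never show that this shift is compatible with the stated conclusion (nor rule out that the true optimum lies at a corner of the box other than the one the theorem prescribes). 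Since Remark~\ref{R1} is proved in the paper only for a bare two-vector frame, this is exactly the step that cannot be waved through; a proof must either bound the shift or argue directly that the box constraint is active in the claimed direction. Until that is done the argument is a plan, not a proof.

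A second, smaller gap: your elimination of the interior vectors rests on the assertion that every interior $\phi_j$ satisfies $\ip{\phi_j}{u_+}^2/\lambda_+>\ip{\phi_j}{u_-}^2/\lambda_-$, equivalently $\tan^2\theta_j<\lambda_-/\lambda_+$ where $\theta_j$ is the angle from the dominant eigenvector. This holds in the symmetric two-outer-vector configuration (where $\lambda_-/\lambda_+=\tan^2\alpha$ and $|\theta_j|<\alpha$), but "angularly trapped between the outer vectors" does not by itself imply it once the norms are unequal and $u_+$ is no longer the bisector; you need an argument that the dominant eigenvector of the full frame operator is positioned so that the inequality survives, uniformly over the admissible outer weights. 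Both gaps are plausibly fillable, and your framework ($\tr$ affine, $\det$ bilinear with nonnegative cross term, concavity of $\det$ along lines of constant trace) is the right machinery for filling them; but the proposal as it stands establishes neither the interior monotonicity in general nor the outer equalization in the presence of $S_0$.
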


\begin{proof}
This follows from remarks \ref{R1}, \ref{R2}.
\end{proof}

\section{Conclusion}

  Two vectors in $\RR^2$ are scalable
if and only if they are orthogonal.  Otherwise, we minimize the condition number by making the vectors equal norm.  Three
vectors are scalable if and only if with any changes in signs they do not lie in a quadrant.  In general, a frame is scalable if
and only if after any changes in signs, they do not live in a quadrant.  Otherwise, to minimize the condition number we should
pick $\phi_i,\phi_j$ satisfying
\[ |\langle \phi_i, \phi_j\rangle |=min_{k\not= n}|\langle \phi_k,\phi_n\rangle|,\]
and scale these vectors to be equal length and set all other frame
vectors to zero
Finally we answered the question of how to minimize the condition number if there is a restriction on how much we can
scale the vectors.


\begin{thebibliography}{WW}

\bibitem{CC}  J. Cahill and X. Chen, {\it A note on scalable frames}, Proceedings of the 10th International Conference on Sampling Theory and Applications, 93-96, 2013.

\bibitem{CCH}  P.G. Casazza and X. Chen, {\it Frame scalings:  A condition number approach}, Linear Algebra and Applications, 523 (2017) 152-168. ArXiv: 1510.01653.

\bibitem{C} O. Christensen, {\it An introduction to frames and Riesz bases}, Birkhauser, Boston (2003).

\bibitem{CL}  P.G. Casazza and R. Lynch, {\it A brief introduction to Hilbert space frame theory and its applications}, 
Proceedings of Symposia in Applied Mathematics - Finite Frame Theory; AMS Short Course 2015, K. Okoudjou, Ed. {\bf 73} (2016) 1-51.arxiv:  1509.07347

\bibitem{CK}  A. Chan, R. Domagalski, YU.H. Kin, S.K. Narayan, H. Suh, and X. Zhang, {\it Minimal skcalings and structual
properties of scalable frames}, Operators and matrices 11(4) 2015.

\bibitem{CKO}  X. Chen, G. Kutyniok, K.A. Okoudjou, F. Philipp, and R. Wang, {\it Measures of scalability},
EEE Trans. Inf. Theory, 61(8):4410-4423, 2015.

\bibitem{CKL}  M.S. Copenhaver, Y.H. Kin, C. Logan, K. Mayufield, S. Narayan, M.J. Petro, and J. Sheperd, {\it Diagram 
vectors and tight frame scaling in finite dimensions}, Oper. Matrices,
8(1):78-88, 2014.

\bibitem{DK}  R. Domagalski, Y. Kim, and S. K. Narayan, On minimal scalings of scalable frames,
Proceedings of the 11th International Conference on Sampling Theory and Applica-
tions, 91-95, 2015.

\bibitem{KOP}  G. Kutyniok, K.A. Okoudjou, and F. Philipp, {\it Scalable frames and convex geometry},
Contemp. Math., 345, 2013.


\bibitem{KO}  G. Kutyniok, K.A. Okoudjou, F. Phillip, and E.K. Tuley, {\it Scalable frames}, 
Linear
Algebra Appl., 438:2225-2238, 2013.

\end{thebibliography}
\end{document}